\def\esp{\mathbb{E}}
\def\1{\mathbb{I}}
\newcounter{thm}[section]
\newcounter{appen}[section]
\newtheorem{theor}[thm]{Theorem}
\newtheorem{lem}[thm]{Lemma}
\newtheorem{lem_a}[appen]{Lemma}
\newenvironment{proof}[1][Proof]{\noindent \textbf{#1.}
}{\rule{0.5em}{0.5em}}
\newtheorem{hp}{Assumption}
\begin{document}

\title{Testing for the significance of functional covariates in  regression models}
\author{Samuel Maistre\footnote{Corresponding author. CREST (Ensai) \& IRMAR (UEB), France; samuel.maistre@ensai.fr} \quad and \quad
Valentin Patilea\footnote{CREST (Ensai) \& IRMAR (UEB), France; patilea@ensai.fr}}
\date{}

\maketitle

\qquad

\begin{abstract}
Regression models with a response variable taking values in a Hilbert space and hybrid covariates are considered. This means two sets of regressors are allowed, one of finite dimension and a second one functional with values in a Hilbert space. The problem we address is the test of the effect of the functional covariates. This problem occurs for instance when checking the goodness-of-fit of some regression models for functional data. The significance test for functional regressors in nonparametric regression with hybrid covariates and scalar or functional responses is another example where the core problem is the test on the effect of functional covariates.
We propose a new test based on kernel smoothing. The test statistic is asymptotically standard normal under the null hypothesis provided the smoothing parameter tends to zero at a suitable rate. The one-sided test is consistent against any fixed alternative and detects local alternatives \`a la Pitman approaching the null hypothesis. In particular we show that neither the dimension of the outcome nor the dimension of the functional covariates influences the theoretical power of the test against such local alternatives. Simulation experiments and a real data application illustrate the performance of the new test with finite samples.

\smallskip

{\bf Keywords.}  Regression, goodness-of-fit test, functional data, $U-$statistics
\end{abstract}

\newpage

\section{Introduction}\label{intro}

Let $(\mathcal{H}_1, \langle\cdot,\cdot\rangle_{\mathcal{H}_1})$ and $(\mathcal{H}_2, \langle\cdot,\cdot\rangle_{\mathcal{H}_2})$ denote two possibly different Hilbert spaces. The main examples of Hilbert spaces we have in mind are $\mathbb{R}^p,$ for some $p\geq 1,$ and $L^2[0,1],$ the space of squared integrable real-valued functions defined on the unit interval.

Consider the random variables $U\in\mathcal{H}_1$ and $W\in\mathcal{H}_2$ and let
$Z$ be a column random vector in $\mathbb{R}^{q},$
$q\geq 0.$ By convention, $q=0$ means that $Z$ is a constant. Let $(U_{i},Z_i,W_i)$, $1\leq i \leq n,$ denote a sample of independent copies of $(U,Z,W)$. The statistical problem we consider is the test of the hypothesis
\begin{equation}
\mathbb{E}[U\mid Z,W]=0\qquad \text{a.s.},  \label{quang}
\end{equation}
against a general alternative like $\mathbb{P}(\mathbb{E}[U\mid Z,W]=0)<1.$ This type of problem occurs in many model check problems.



Consider the random variables $U\in \mathcal{H}_1$, $\widetilde W\in\mathcal{H}_2.$ For illustration, suppose that $U$ is centered. Consider the problem of \emph{testing the effect of the functional variable $\widetilde W$,} that is testing the condition  $\mathbb{E}[U\mid \widetilde W] = 0.$ \cite{Patilea2012b} proposed a test procedure  based on projections into finite dimension subspaces of $\mathcal{H}_2$. Their test statistic is somehow related to a Kolmogorov-Smirnov statistic in a finite dimension space with the dimension growing with the sample size. Here we propose an alternative route that avoids optimization in high dimension.
Let $Z=\langle \widetilde W, \phi_1 \rangle_{\mathcal{H}_2}$ where $\phi_1$ is an element of an orthonormal basis of  $ \mathcal{H}_2.$ Suppose that $Z$ admits a density with respect to the Lebesgue measure. The basis  of  $ \mathcal{H}_2$ could be the one given by the functional principal components which in general has to be estimated from the data. In such a case, the sample of  $Z_i'$s has to estimated too. Let  $W= \widetilde W - \langle \widetilde W, \phi_1 \rangle_{\mathcal{H}_2} \phi_1.$ Then, testing  $\mathbb{E}[U\mid \widetilde W] = 0$ is nothing but testing condition (\ref{quang}).


\cite{Aneiros-Perez2006} introduced the \emph{semiparametric functional partially linear models} as  an extension of the partially linear model to functional data. Such model writes as
$$
Y = Z^\top \beta  + m(W) + U, \quad \text{with}\quad \mathbb{E}[U\mid Z,W]=0\;\; a.s.,
$$
where $Y$ is a scalar response and $Z$ is a $q-$dimension  vector of random covariates,  $W$ is a random variable taking values in a functional space, typically $L^2[0,1].$  The  column vector of $q$ coefficients $\beta$ and the function  $m(\cdot)$ have to be estimated. Before estimating  $m(\cdot)$ nonparametrically, one should first check the significance of the variable $W$ which means exactly testing condition  (\ref{quang}).
In this example, the variable $U$ is not observed and the sample $U_1,\cdots,U_n$ could be estimated by the residuals of  the linear fit of $Y$ given  $Z.$ The estimation error for the sample of $U$ is of rate $O_{\mathbb{P}}(n^{-1/2})$ and could be easily proved to be negligible for our test.


Other examples of regression model checks that lead to a problem like (\ref{quang}) are the functional linear regression with scalar or functional responses, quadratic functional regression, generalized functional regression, \emph{etc.} See for instance \cite{Horvath2012} for a recent panorama on the functional regression models. In such situations one has to estimate the sample $U_1,\cdots,U_n$ from the functional regression model considered. The estimation error is in general larger than the parametric rate $O_{\mathbb{P}}(n^{-1/2}),$ but one can still show that, under reasonable conditions, it remains negligible for the test purposes. See \cite{Patilea2012b} for a related framework.

Another example, related to the problem of testing the effect of a functional variable, is the \emph{variable selection in functional nonparametric regression with functional responses}. Regression models for functional responses are now widely used, see for instance \cite{Faraway1997}. Two situations were studied: finite and infinite dimension covariates; see  \cite{Ramsay2005}, \cite{Ferraty2011},  \cite{Ferraty2012}. Consider the hybrid case with both finite and infinite dimension covariates. An important question is the significance of the functional covariates. In a more formal way, let  $Y\in \mathcal{H}_1$ be the response and let $Z\in \mathbb{R}^q$ and $W\in\mathcal{H}_2$ denote the covariates. Then the problem is to test the equality
$$
\mathbb{E}[Y\mid Z, W] = \mathbb{E}[Y\mid Z].
$$
Let $U= Y- \mathbb{E}[Y\mid Z].$ Then the problem becomes to test whether
$\mathbb{E}[ U \mid Z, W]=0$ almost surely, that is the condition  (\ref{quang}). Again the sample of the variable $U$ is not observed and has to be estimated by the residuals of the nonparametric regression of  $Y$ given $Z.$ See also \cite{Lavergne2014} for a related procedure.

As a last example where a condition like (\ref{quang}) occurs consider the problem of testing the independence between a random variable $Y$ and a functional spaced valued variable $\widetilde W.$ Without loss of generality, one could suppose that $Y$ takes values in the unit interval.
Define $U(t) = \mathbf{1}\{Y\leq t  \} - \mathbb{P}(Y\leq t),$ $t\in[0,1],$ that  is  centered and belongs to $L^2[0,1]$. The independence between $Y$ and $\widetilde W$ is equivalent to the condition $\mathbb{E}[U\mid \widetilde W] = 0.$ Conditional independence of $Y$ and a functional random variable given some finite random vector $Z$ could be also tested. It suffices to define $U(t)$ by centering with the conditional probability of the event $\{Y\leq t  \}$ given $Z$ and to check a condition like (\ref{quang}).

To our best knowledge the statistical problem we address in this work was very little investigated in full generality.  \cite{Chiou2007} and \cite{Kokoszka2008} investigated the problem of goodness-of-fit with functional responses.
\cite{Chiou2007} considered  plots of
functional principal components (FPC) scores of the response and the covariate. They
also used residuals versus fitted values FPC scores plots. However, such two dimension
plots could not capture all types of effects of the covariate on the response. \cite{Kokoszka2008} used the response and covariate FPC scores to build a test statistic with $\chi^2$ distribution under the null hypothesis of no linear effect. See also the textbook \cite{Horvath2012}. Again, by construction, such  tests cannot detect any nonlinear alternative.
The goodness-of-fit or no-effect against nonparametric alternatives has been recently
explored in functional data context. In the case of scalar response, \cite{Delsol2011} proposed a testing procedure adapted from the approach of \cite{Hardle1993}. Their procedure involves smoothing in the functional space and requires
quite restrictive conditions. \cite{Patilea2012a} and \cite{Garcia-Portugues2012} proposed alternative nonparametric
goodness-of-fit tests for scalar response and functional covariate using projections
of the covariate. \cite{Patilea2012b} extended the idea to functional responses and seems to be the only contribution allowing for functional responses. Such projection-based methods are less restrictive and perform well in applications. However, they require a search for the most suitable projection and this may involve optimization in high dimension.

The paper is organized as follows. In section \ref{sec_met} we introduce our testing approach, while in section \ref{ass_th_sec} we provide the  asymptotic analysis. The asymptotically standard normal critical values and the consistency of the test are derived. The application to goodness-of-fit tests of functional data models is discussed.
The extension to the case of estimated covariates is presented in section \ref{est_basis}. This allows in particular for an estimated basis in the infinite-dimensional space of the functional covariate.
Section \ref{emp_evid} presents some empirical evidence on the performances of our test and comparisons with existing procedures. The proofs and some technical lemmas are relegated to the Appendix.

\setcounter{equation}{0}
\section{The method}\label{sec_met}

Let us first introduce some notation. Let $\{\phi_1,\phi_2,\cdots  \}$ be some orthonormal basis of  $\mathcal{H}_2$ that for the moment is supposed to be fixed. In section \ref{est_basis} we consider the case of a data-driven basis.  For simplicity and without any loss of generality in the following, assume hereafter that $\mathbb{E}(W)=0.$ Then we can decompose $W=\sum_{k\geq 1} \langle W,\phi_k\rangle_{\mathcal{H}_2}\phi_k$ and the norm of $W$ satisfies the relationship $\|W\|^2_{\mathcal{H}_2}=\sum_{k\geq 1} \langle W,\phi_k\rangle_{\mathcal{H}_2}^2.$
Let us note that
$$
\mathbb{E}(U\mid Z,W)=\mathbb{E}(U\mid Z, \langle W,\phi_1\rangle_{\mathcal{H}_2},\langle W,\phi_2\rangle_{\mathcal{H}_2},\cdots).
$$
Next, for any positive integer $p,$ let
$$W_{\underline{p}}=(\langle W,\phi_1\rangle_{\mathcal{H}_2},\cdots, \langle W,\phi_p\rangle_{\mathcal{H}_2} )^\top.$$
For a function  $l$, let $\mathcal{F}[l]$ denote the Fourier Transform of  $l$. Let  $K$ be a multivariate kernel defined on $\mathbb{R}^{q}$ such that
$\mathcal{F}[K]>0$
and $\varphi(s)=\exp(-\|s\|^2/2),$  $\forall s\in\mathbb{R}^p,$ where here $\|\cdot\|$ is the Euclidean norm in $\mathbb{R}^p.$
Many univariate kernels satisfy the positive Fourier Transform condition, for instance the  gaussian, triangle, Student and  logistic densities. To obtain a multivariate kernel with positive Fourier Transform it suffice to consider a multiplicative kernel with positive Fourier Transform univariate kernels.

\subsection{The idea behind the testing method}

The new procedure proposed below is motivated by the following facts. First, if $(U_1,Z_1,W_1)$ and $(U_2,Z_2,W_2)$ are independent copies of $(U,Z,W),$  for any positive function $\omega (\cdot)$ and any  $h>0$ and $p$ positive integer, by the Inverse Fourier Transform formula,
\begin{multline}
I_p(h) =  \mathbb{E}\left[\langle U_{1}, U_{2}\rangle_{\mathcal{H}_1}\omega(Z_{1})\omega(Z_{2})h^{-q}K((Z_{1}-Z_{2})/h)\varphi(W_{1,\underline{p}}-W_{2,\underline{p}})\right]\nonumber \\
 =  \mathbb{E}\left[\langle U_{1}, U_{2}\rangle_{\mathcal{H}_1}\omega(Z_{1})\omega(Z_{2})\int_{\mathbb{R}^{q}}e^{2\pi it^{\top}(Z_{1}-Z_{2})}\mathcal{F}[K](th)dt\right.\\
 \left.\times \int_{\mathbb{R}^p}e^{2\pi is^\top(W_{1,\underline{p}}-W_{2,\underline{p}})
 }\mathcal{F}[\varphi](s)ds\right]\nonumber \\
 =  \int_{\mathbb{R}^{q}}\int_{\mathbb{R}^{p}}\left\|\mathbb{E}\left[\mathbb{E}[U\mid Z,W_{\underline{p}}]\omega(Z)e^{-i\{t^{\top}Z +  s^\top W_{\underline{p}} \}}\right]\right\|_{\mathcal{H}_1}^{2}\mathcal{F}[K](th)\mathcal{F}[\varphi](s)dtds.
\end{multline}
By the properties of the Fourier Transform and the conditions  $\mathcal{F}[\varphi], \mathcal{F}[K]>0$ (and $\omega>0$), for any $h>0$ and $p$  the real number $I_p(h)$ is nonnegative and
\[
\mathbb{E}(U\mid Z,W_{\underline{p}\;})=0\;\; a.s.\;\;\Leftrightarrow\;\; I_p(h)=0.
\]
Second, by a martingale convergence argument with respect to $p$, it follows that
$$
\mathbb{E}(U\mid Z,W)=0\;\; a.s.\;\;\Leftrightarrow\;\;
\mathbb{E}(U\mid Z,W_{\underline{p}\;})=0\;\; a.s. \quad\forall p\in\{1,2,\cdots\}.
$$
These intuitions are formalized in  the following fundamental lemma, up to some technical modification. In the following  $a_1,a_2,\cdots$ is a fixed sequence of positive real numbers. For any sequences $u=\{u_k\}_{k\geq 1},$ $v = \{v_k\}_{k\geq 1},$  let
\begin{equation}\label{inner_a}
\langle u, v\rangle_{\mathcal{A}} = \sum_{k\geq 1} a_k u_k v_k \quad \text{and} \quad \|u\|^2_{\mathcal{A}} =
\sum_{k\geq 1} a_k u_k^2,
\end{equation}
whenever the series converge.

\begin{lem}\label{lemma_fund}
Assume that $\omega(\cdot) >0,$ $\mathcal{F}[K]>0,$ $\mathcal{F}[K]$ is integrable and $\mathbb{E}(\omega^2(Z) \|U\|^2_{\mathcal{H}_1})<\infty,$
$\mathbb{E}(\|W\|^2_{\mathcal{H}_2})<\infty.$ Assume that $\sum_{k\geq 1} a_k < \infty$ and let
$$
I(h) =  \mathbb{E}\left[\langle U_{1}, U_{2}\rangle_{\mathcal{H}_1}\omega(Z_{1})\omega(Z_{2})h^{-q}K((Z_{1}-Z_{2})/h)
\exp (-\|W_1 - W_2\|^2_{\mathcal{A}}/2)\right],\qquad h>0.
$$
Then, for any $h>0$ we have
$$
\mathbb{E}(U\mid Z,W) = 0 \;\; a.s. \qquad \Leftrightarrow \qquad I(h)=0.
$$
\end{lem}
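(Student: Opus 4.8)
The plan is to turn $I(h)$ into a manifestly nonnegative quadratic form by a double Fourier inversion --- one for the kernel in $Z$ and one for the Gaussian-type factor in $W$ --- and then to read the equivalence off the strict positivity of $\mathcal{F}[K]$ together with a Fourier-uniqueness argument. First I would exploit $\sum_{k\ge 1} a_k<\infty$: the series $\tilde G=\sum_{k\ge 1} G_k\phi_k$ with independent $G_k\sim\mathrm{N}(0,a_k)$ defines a centred Gaussian random element of $\mathcal{H}_2$ (since $\mathbb{E}\|\tilde G\|_{\mathcal{H}_2}^2=\sum_k a_k<\infty$), independent of the data, whose characteristic functional is exactly, with $W_{i,k}=\langle W_i,\phi_k\rangle_{\mathcal{H}_2}$,
$$\mathbb{E}_{\tilde G}\bigl[e^{i\langle \tilde G,\, W_1-W_2\rangle_{\mathcal{H}_2}}\bigr]=\prod_{k\ge 1} e^{-a_k(W_{1,k}-W_{2,k})^2/2}=\exp\bigl(-\|W_1-W_2\|_{\mathcal{A}}^2/2\bigr).$$
Combined with the inversion $h^{-q}K((Z_1-Z_2)/h)=\int_{\mathbb{R}^q}e^{2\pi i t^\top(Z_1-Z_2)}\mathcal{F}[K](th)\,dt$, this replaces both non-exponential factors in $I(h)$ by exponentials.

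Second, I would interchange the sample expectation with the $t$-integral and the $\tilde G$-expectation. Fubini is legitimate because the modulus of the integrand is dominated by $\|U_1\|_{\mathcal{H}_1}\|U_2\|_{\mathcal{H}_1}\omega(Z_1)\omega(Z_2)\,\mathcal{F}[K](th)$ (the remaining exponentials have modulus one and $\mathcal{F}[K]>0$), whose total mass equals $\bigl(\mathbb{E}[\omega(Z)\|U\|_{\mathcal{H}_1}]\bigr)^2\,h^{-q}\!\int\mathcal{F}[K]$, finite since $\mathbb{E}(\omega^2(Z)\|U\|_{\mathcal{H}_1}^2)<\infty$ forces $\mathbb{E}[\omega(Z)\|U\|_{\mathcal{H}_1}]<\infty$ and $\mathcal{F}[K]$ is integrable. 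Writing $A_i(t,g)=\omega(Z_i)e^{2\pi i t^\top Z_i+i\langle g,W_i\rangle_{\mathcal{H}_2}}$ and $\Psi(t,g)=\mathbb{E}[U\,A_1(t,g)/\omega(Z_1)\cdot\omega(Z)]$ --- more precisely $\Psi(t,g)=\mathbb{E}[U\,\omega(Z)e^{2\pi i t^\top Z+i\langle g,W\rangle_{\mathcal{H}_2}}]$ in the complexification $\mathcal{H}_1^{\mathbb{C}}$ --- independence of the two copies gives $\mathbb{E}[\langle U_1,U_2\rangle_{\mathcal{H}_1}A_1(t,g)\overline{A_2(t,g)}]=\|\Psi(t,g)\|_{\mathcal{H}_1^{\mathbb{C}}}^2$, so that
$$I(h)=\int_{\mathbb{R}^q}\int_{\mathcal{H}_2}\mathcal{F}[K](th)\,\|\Psi(t,g)\|_{\mathcal{H}_1^{\mathbb{C}}}^2\,\mu_{\tilde G}(dg)\,dt\ \ge\ 0,$$
where $\mu_{\tilde G}$ is the law of $\tilde G$. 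Since $\mathcal{F}[K]>0$, we get $I(h)=0$ if and only if $\Psi(t,g)=0$ for $(\mathrm{Leb}\otimes\mu_{\tilde G})$-almost every $(t,g)$.

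Third, I would upgrade this almost-everywhere vanishing. By dominated convergence (dominating function $\omega(Z)\|U\|_{\mathcal{H}_1}$) the map $\Psi$ is continuous on $\mathbb{R}^q\times\mathcal{H}_2$, and because every $a_k>0$ the Gaussian law $\mu_{\tilde G}$ has full support $\mathcal{H}_2$; a continuous function vanishing $\mu_{\tilde G}$-a.e. therefore vanishes on the whole support, whence $\Psi(t,g)=0$ for \emph{every} $t\in\mathbb{R}^q$ and $g\in\mathcal{H}_2$. Specializing to $g=\sum_{k=1}^p s_k\phi_k$ yields, for each $p$, all $(t,s)\in\mathbb{R}^q\times\mathbb{R}^p$ and all $e\in\mathcal{H}_1$, the scalar identity $\mathbb{E}[\langle U,e\rangle_{\mathcal{H}_1}\omega(Z)\,e^{2\pi i t^\top Z+i s^\top W_{\underline{p}}}]=0$; injectivity of the Fourier transform of the finite signed measures $\langle U,e\rangle_{\mathcal{H}_1}\omega(Z)\,d\mathbb{P}$ pushed onto $(Z,W_{\underline{p}})$ then gives $\omega(Z)\,\mathbb{E}[U\mid Z,W_{\underline{p}}]=0$ a.s., hence $\mathbb{E}[U\mid Z,W_{\underline{p}}]=0$ a.s. because $\omega>0$. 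Letting $p\to\infty$ and invoking martingale convergence along $\sigma(Z,W_{\underline{p}})\uparrow\sigma(Z,W)$ produces $\mathbb{E}[U\mid Z,W]=0$ a.s. The converse is immediate: if $\mathbb{E}[U\mid Z,W]=0$ then $\Psi(t,g)=\mathbb{E}[\mathbb{E}(U\mid Z,W)\,\omega(Z)e^{2\pi i t^\top Z+i\langle g,W\rangle_{\mathcal{H}_2}}]=0$ for all $(t,g)$, and the representation forces $I(h)=0$.

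The main obstacle is the passage from the weighted infinite-dimensional factor $\exp(-\|W_1-W_2\|_{\mathcal{A}}^2/2)$ to a usable uniqueness statement in the functional variable $W$: this is exactly what the auxiliary Gaussian element $\tilde G$ and its full support (guaranteed jointly by $a_k>0$ and $\sum_k a_k<\infty$) provide, reducing the infinite-dimensional problem to the finite-$p$ characteristic-function identity and then to the martingale argument already sketched before the lemma. The remaining ingredients --- continuity of $\Psi$, the Fubini bound, and the finite-dimensional Fourier injectivity --- are routine once the moment and integrability hypotheses are used.
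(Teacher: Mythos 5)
Your proof is correct and follows essentially the same route as the paper's: the factor $\exp(-\|W_1-W_2\|_{\mathcal{A}}^2/2)$ is rewritten as the characteristic functional of a trace-class Gaussian measure on $\mathcal{H}_2$ (your $\tilde G$ is exactly the measure $\mu_Q$ the paper obtains from Theorem 1.12 of Da Prato--Zabczyk), the kernel in $Z$ is Fourier-inverted, and Fubini turns $I(h)$ into a nonnegative integral of $\|\Psi(t,g)\|^2_{\mathcal{H}_1}$ against $\mathcal{F}[K](th)\,dt\,\mu(dg)$. The only difference is the endgame --- you pass through finite-dimensional projections, Fourier injectivity on $\mathbb{R}^{q+p}$ and martingale convergence, whereas the paper invokes uniqueness of the Fourier transform of measures on a Hilbert space directly; your explicit continuity-plus-full-support step upgrading the a.e.\ vanishing of $\Psi$ to everywhere is a detail the paper leaves implicit.
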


\smallskip

The reason for introducing a sequence $\{a_k\}$ with convergent partial sums is  technical. It allows for an inverse Fourier Transform formula in infinite-dimensional Hilbert spaces. In the remark following Theorem \ref{cons_Tn} we argue that considering the weighted norm
$\|\cdot \|_{\mathcal{A}}$ is not restrictive.

The idea behind the new approach we propose is to build a test statistic using an approximation of  $I(h).$ Moreover, we will let $h$ tend to zero in order to obtain an asymptotically pivotal test statistic with standard gaussian critical values. A convenient choice of the function $\omega(\cdot)$ will allow to simplify this task. As explained below, in many examples one could simply take $\omega(\cdot)\equiv 1$.

\subsection{The test statistics}\label{sec_def_test}

To estimate  $I(h)$ using the i.i.d. sample $(U_i,Z_i,W_i)$, $1\leq i\leq n$, we consider the $U-$statistic
\[
I_{n}(h) = \frac{1}{n(n-1)h^{q}}\sum\limits _{1\leq i\neq j\leq n}\left\langle U_{i}\omega(Z_{i}), \;U_{j}\omega(Z_{j}) \right\rangle_{\mathcal{H}_1} K_{ij}(h)\;
\varphi_{ij},
\]
where
\begin{equation}\label{def_phi}
K_{ij}(h) = K((Z_i - Z_j)/h), \qquad \varphi_{ij} = \exp (-\|W_i - W_j\|_{\mathcal{A}}^2/2).
\end{equation}
The variance of $I_n(h)$ could be estimated by
$$
v^2_{n}(h) = \frac{2}{n^2(n-1)^2h^{2q}}\sum\limits _{1\leq i\neq j\leq n}\left\langle U_{i}\omega(Z_{i}), \;U_{j}\omega(Z_{j}) \right\rangle^2_{\mathcal{H}_1} K^2_{ij}(h)\;
\varphi^2_{ij}.
$$
Then, the test statistic is
\begin{equation}\label{test_stat}
T_n = \frac{I_n(h)}{ v_{n}(h)}.
\end{equation}

When the $U_{i}\omega(Z_{i})$'s need  to be estimated, the test statistics becomes
\begin{equation}\label{test_stat_hat}
\widehat T_n = \frac{\widehat I_n(h)}{ \widehat v_{n}(h)},
\end{equation}
where
\[
\widehat I_{n}(h) = \frac{1}{n(n-1)h^{q}}\sum\limits _{1\leq i\neq j\leq n}\left\langle \widehat {U_{i}\omega(Z_{i})}, \;\widehat{U_{j}\omega(Z_{j})} \right\rangle_{\mathcal{H}_1} K_{ij}(h)\;
\varphi_{ij},
\]
$$
\widehat v^2_{n}(h) = \frac{2}{n^2(n-1)^2h^{2q}}\sum\limits _{1\leq i\neq j\leq n}\left\langle \widehat {U_{i}\omega(Z_{i})}, \;\widehat{U_{j}\omega(Z_{j})} \right\rangle^2_{\mathcal{H}_1} K^2_{ij}(h)\;
\varphi^2_{ij}.
$$
and the $\widehat {U_{i}\omega(Z_{i})}$ are some estimates of the
$U_{i}\omega(Z_{i})$'s.

In the example on testing the effect of a functional variable the $U_i$ are supposed observed so that $T_n$ could be used.
For the semiparametric functional partially linear models, to build $\widehat T_n$ it is convenient to take the $\omega(Z_{i})$ constant equal to 1 while the $\widehat U_{i}$ will be the residuals of the linear model with response $Y$ and covariate vector $ Z \in \mathbb{R}^q $. In the other examples of functional regression models mentioned above (functional linear regression with scalar or functional responses, quadratic functional regression, generalized functional regression, \emph{etc.}), it is convenient to set all $\omega(Z_{i})$ equal to 1 and take the $\widehat U_{i}$'s to be the residuals of the functional regression model. Below we will provide an example of argument for showing that, under suitable assumptions, replacing the $ U_{i}$'s by the $\widehat U_{i}$'s does not change the asymptotic behavior of our test statistics. Next, for variable selection in functional nonparametric regression with functional responses one can use $\widehat T_n$ and a convenient choice is  $\omega(\cdot)$ equal to the density of $Z$ and
\begin{equation*}\label{rrr}
\widehat {U_{i}\omega(Z_{i})} =  \frac{1}{n-1}\sum\limits _{k\neq i} (Y_i - Y_k)\frac{1}{g^q}L_{ik}(g),
\end{equation*}
where $L$ is another kernel, $L_{ik}(g) = L((Z_i - Z_k)/g)$ and $g$ is a bandwidth converging to zero at a suitable rate. Showing that the estimation error of the $U_i\omega(Z_i)$'s is negligible for the testing purpose requires more complicated technical assumptions but could be obtained along the lines of  the results of
\cite{Lavergne2014}. However, such en investigation is left for future work. Finally, for testing the independence between a $[0,1]-$valued random variable $Y$ and a $L^2[0,1]-$valued random variable, one could take $\omega(\cdot)\equiv 1$ and define $\widehat U_i = \mathbf{1}\{Y_i \leq t  \} - n^{-1}\sum_{j=1}^n \mathbf{1}\{Y_j\leq t  \}.$

%
%

\setcounter{equation}{0}
\section{Asymptotic theory}\label{ass_th_sec}

In this section we investigate the asymptotic properties of $T_n$ under the null hypothesis (\ref{quang}) and under a sequence of alternative hypothesis. When the $U_i\omega(X_i)$'s have to be estimated, the idea is to show that the difference $\widehat T_n - T_n$ is asymptotically negligible under suitable model assumptions. This aspect is investigated in section \ref{as_tnhat} below.

\subsection{The asymptotic critical values}\label{as_c_tn}

Under mild technical conditions we show that the test statistic is asymptotically standard normal under the null hypothesis  $\mathbb{E}[U\mid Z,W]=0$ a.s.

\setcounter{hp}{3}

\begin{hp}
\label{D}
\hspace{-0.1cm}
\begin{enumerate}
\item[(a)]
The random vectors $(U_{1},Z_{1},W_1)
,\ldots ,(U_{n},Z_{n},W_n)$ are independent draws from the random vector $(U,Z,W)\in \mathcal{H}_1 \times \mathbb{R}^q \times \mathcal{H}_2$ that satisfies  $\mathbb{E}\| U \omega(Z)\|^{4}_{\mathcal{H}_1}<\infty .$

\item[(b)]
\begin{enumerate}
\item[(i)]
The vector $Z$ admits a density $f_Z$ that is either bounded or
satisfies the condition $\int_{\mathbb{R}^q} |\mathcal{F}[f_Z]|^{2-a}(t) dt <\infty$ for some $a\in (0,1]$.
\item[(ii)] The functional covariate satisfies $\mathbb{E}[\|W\|^2_{\mathcal{H}_2}]<\infty.$

 \item[(iii)] The norm  $\|\cdot\|_{\mathcal{A}}$ is defined like in  equation (\ref{inner_a}) with a positive sequence $\{a_k\}$ such that $\sum_{k\geq 1} a_k <\infty.$
\end{enumerate}

\item[(c)]
$\exists \ \underline{\sigma}^{2},$ $C>0$ and $\nu >2$
such that:
\begin{enumerate}
\item[(i)] $0 < \underline{\sigma}^{2} \leq \mathbb{E}\left[\langle U_{1} \omega(Z_{1}),U_{2} \omega(Z_{2})\rangle^2_{\mathcal{H}_1}  \mid (Z_{1},W_1), (Z_2,W_{2})\right]$ almost surely;
\item[(ii)] $ \mathbb{E}\left[\Vert U \omega(Z)\Vert^{\nu}_{\mathcal{H}_1} \mid Z,W \right] \leq C$ almost surely.
\end{enumerate}

\end{enumerate}
\end{hp}

\setcounter{hp}{10}
\begin{hp}
\label{K} \hspace{-0.1cm}
\begin{enumerate}
\item[(a)] The kernel $K$ is multiplicative kernel in $\mathbb{R}^q$, that is $K(z_1,\cdots,z_q)= \tilde K(z_1)\cdots \tilde K(z_q)$ where $\tilde K$ is a symmetric  density of bounded variation on real line. Moreover the Fourier Transform $\mathcal{F}[\tilde K]$ is positive and integrable.
\item[(b)] $h\rightarrow 0$ and $nh^{q}/\ln n  \rightarrow
\infty.$
\end{enumerate}
\end{hp}

\begin{theor}\label{cons_Tn}
Under the Assumptions \ref{D} and \ref{K} the  test statistic $
T_{n} $ converges in law to a standard normal, provided the hypothesis (\ref{quang}) is true. Consequently, the test given by $\mathbb{I}(T_n \geq z_{1- \alpha})$, with $z_\alpha$  the $(1- \alpha)-$quantile of the standard normal distribution,  has asymptotic level  $ \alpha.$
\end{theor}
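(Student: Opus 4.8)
The plan is to recognise $I_n(h)$ as a second-order $U$-statistic with a kernel depending on $n$ through $h$, and to establish its asymptotic normality via the central limit theorem for degenerate $U$-statistics of Hall (1984). Writing $x_i=(U_i,Z_i,W_i)$ and
$$
H_n(x_i,x_j)=h^{-q}\langle U_i\omega(Z_i),U_j\omega(Z_j)\rangle_{\mathcal{H}_1}K_{ij}(h)\,\varphi_{ij},
$$
we have $I_n(h)=\{n(n-1)\}^{-1}\sum_{i\neq j}H_n(x_i,x_j)$ with $H_n$ symmetric. The first step is to verify complete degeneracy under the null. Conditioning on $x_1$ and integrating out $x_2$, the factors $\omega(Z_2)$, $K_{12}(h)$ and $\varphi_{12}$ are measurable with respect to $(Z_1,W_1,Z_2,W_2)$, so by the tower property and $\mathbb{E}[U\mid Z,W]=0$ we get $\mathbb{E}[H_n(x_1,x_2)\mid x_1]=0$ almost surely. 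Hence $I_n(h)$ is completely degenerate, the cross terms in its variance vanish, and $\Var(I_n(h))=\{2/(n(n-1))\}\,\mathbb{E}[H_n^2(x_1,x_2)]$.

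Second, I would pin down the exact order of the second moment. The change of variables $Z_2=Z_1-hu$ in $\mathbb{E}[H_n^2]=h^{-2q}\mathbb{E}[\langle U_1\omega(Z_1),U_2\omega(Z_2)\rangle^2_{\mathcal{H}_1}K_{12}^2(h)\varphi_{12}^2]$, using the boundedness $0<\varphi_{ij}\le 1$ and the moment condition $\mathbb{E}\|U\omega(Z)\|^4_{\mathcal{H}_1}<\infty$ of Assumption \ref{D}(a), yields $\mathbb{E}[H_n^2]=h^{-q}\{\sigma_0^2+o(1)\}$ for a constant $\sigma_0^2$. The lower bound in Assumption \ref{D}(c)(i) guarantees $\sigma_0^2>0$, so $\mathbb{E}[H_n^2]$ is of exact order $h^{-q}$ and $\Var(I_n(h))\asymp (n^2h^q)^{-1}$. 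When $f_Z$ is not bounded, I would replace the pointwise change of variables by a Parseval argument exploiting the integrability of $\mathcal{F}[\tilde K]$ in Assumption \ref{K}(a) together with the Fourier condition $\int|\mathcal{F}[f_Z]|^{2-a}<\infty$ of Assumption \ref{D}(b)(i).

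Third, I would check the two ratio conditions of Hall's theorem. The same kernel computation gives $\mathbb{E}[H_n^4]=O(h^{-3q})$, so $n^{-1}\mathbb{E}[H_n^4]/(\mathbb{E}[H_n^2])^2=O((nh^q)^{-1})\to0$ by Assumption \ref{K}(b). For $G_n(x_1,x_2)=\mathbb{E}[H_n(x_3,x_1)H_n(x_3,x_2)\mid x_1,x_2]$, integrating the product $K_{31}(h)K_{32}(h)$ over $x_3$ concentrates on $\|Z_1-Z_2\|=O(h)$ and contributes one extra factor $h^q$; a careful count then gives $\mathbb{E}[G_n^2]=O(h^{-q})$, whence $\mathbb{E}[G_n^2]/(\mathbb{E}[H_n^2])^2=O(h^q)\to0$. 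These are exactly Hall's conditions, and combined with the degeneracy of step one they give $I_n(h)/\{\Var(I_n(h))\}^{1/2}\cvd N(0,1)$; the conditional moment bound of Assumption \ref{D}(c)(ii) with $\nu>2$ controls the Lindeberg-type remainder in the underlying martingale-difference representation of the degenerate $U$-statistic.

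Finally, I would show the variance estimator is consistent, $v_n^2(h)/\Var(I_n(h))\cvp 1$. Since $v_n^2(h)=2\{n^2(n-1)^2\}^{-1}\sum_{i\neq j}H_n^2(x_i,x_j)$, one checks directly that $\mathbb{E}[v_n^2(h)]=\Var(I_n(h))$ exactly, so it remains only to prove $\Var(v_n^2(h))/(\Var(I_n(h)))^2\to0$; this is once more a kernel moment computation controlled by the fourth-moment hypothesis of Assumption \ref{D}(a). Slutsky's lemma then yields $T_n=I_n(h)/v_n(h)\cvd N(0,1)$, and the stated asymptotic level $\alpha$ of the test $\mathbb{I}(T_n\ge z_{1-\alpha})$ follows at once. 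The hard part will be the simultaneous control of the $G_n$ term and of the fluctuations of $v_n^2(h)$ when the design density $f_Z$ is only assumed to satisfy the Fourier integrability condition rather than boundedness; by contrast, the infinite-dimensional character of $U$ and $W$ is harmless, since $U$ enters only through the scalar inner products governed by the moment assumptions and $\varphi_{ij}\in(0,1]$ is uniformly bounded.
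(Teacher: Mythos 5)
Your route---Hall's (1984) CLT for degenerate $U$-statistics applied unconditionally---is a standard and legitimate strategy for statistics of this type, but it is not the paper's route, and under the paper's exact assumptions your verification of Hall's conditions has a concrete gap. The paper instead invokes de Jong's (1987) CLT for quadratic forms \emph{conditionally on the covariates}: it normalizes $I_n(h)$ by $\sigma(n)^2=2\sum_{i\neq j}\sigma_{ij}^2$ with $\sigma_{ij}^2=\mathbb{E}[\Omega_{ij}^2\mid (Z_i,W_i),(Z_j,W_j)]$, and checks (i) a condition on $\max_i\sum_j\sigma_{ij}^2/\sigma(n)^2$, (ii) a conditional Lindeberg condition, and (iii) an eigenvalue condition on the matrix $(\sigma_{ij})$ via a spectral-norm bound. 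Steps (i) and (iii) rest on the uniform kernel-sum bound of Lemma \ref{vW_k}, which is where the two cases for $f_Z$ (bounded versus Fourier-integrable) are actually resolved through an empirical-process inequality; step (ii) uses only H\"older and Markov with the conditional $\nu$-th moment bound of Assumption \ref{D}-(c)-(ii).

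The gap in your proposal is in step three: the claims $\mathbb{E}[H_n^4]=O(h^{-3q})$ and $\mathbb{E}[G_n^2]=O(h^{-q})$ do not follow from Assumptions \ref{D} and \ref{K} as stated. Extracting the factor $h^q$ per integrated kernel via the substitution $Z_2=Z_1-hu$ requires that $\mathbb{E}[\Vert U\omega(Z)\Vert^4_{\mathcal{H}_1}\mid Z,W]$ times the density be bounded; but Assumption \ref{D} gives only an \emph{unconditional} fourth moment, a bounded \emph{conditional} $\nu$-th moment for some $\nu>2$ (not $\nu\geq 4$), and allows an unbounded $f_Z$ satisfying only the Fourier-integrability condition. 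In such cases $\mathbb{E}[H_n^4]$ can exceed the order $h^{-3q}$, and the ratio $n^{-1}\mathbb{E}[H_n^4]/(\mathbb{E}[H_n^2])^2\to 0$ is then no longer guaranteed by $nh^q/\ln n\to\infty$. The conditional de Jong formulation is precisely what lets the paper avoid fourth conditional moments: all covariate-dependent quantities are isolated into $(\sigma_{ij})$ and controlled by Lemma \ref{vW_k}, while the residual randomness in $U$ needs only $\nu>2$. Your degeneracy argument, the identification $\Var(I_n(h))\asymp(n^2h^q)^{-1}$ via Parseval in the unbounded-density case, and the final step ($\mathbb{E}[v_n^2(h)]=\Var(I_n(h))$ exactly, then a variance bound and Slutsky) are all correct in outline---indeed the last step is spelled out more explicitly than in the paper---but the variance bound for $v_n^2(h)$ inherits the same fourth-moment difficulty. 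To make your route rigorous under the stated assumptions you would need either a truncation argument on $U$ or to strengthen \ref{D}-(c)-(ii) to $\nu\geq 4$.
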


\quad

\begin{proof}[Remark 1]
Let us comment on Assumption-(\ref{D})-(ii,iii). Suppose that
the functional covariate satisfies $\mathbb{E}[\langle \langle W, \phi_k\rangle_{\mathcal{H}_2}\rangle^2_{\mathcal{H}_2}] \sim k^{-\beta},$  $\forall k\geq 1,$ for some $\beta >0.$ If $\beta > 2$ one could use directly $\|\cdot\|_{\mathcal{H}_2}$ instead of $\|\cdot\|_{\mathcal{A}}$ to build the test. Indeed, taking $a_k \sim k^{-\beta/2}$ and replacing $W$ by $\check W = \sum_{k \geq 1} b_k^{-1}\langle W, \phi_k\rangle_{\mathcal{H}_2}\phi_k,$ with $b_k=a_k^{1/2},$ one would have $\mathbb{E}(U\mid Z,W)= \mathbb{E}(U\mid Z,\check W),$  $\check W \in\mathcal{H}_2$ and $\|\check W_i - \check W_j \|_{\mathcal{A}} = \|W_i - W_j \|_{\mathcal{H}_2}.$

In the case $\beta=2,$
one could take $a_k \sim k^{-1}(\ln k)^{-(1+\varepsilon)} ,$ for some $\varepsilon >0,$
and replace $W$ by $\check W = \sum_{k \geq 1} a_k^{-1/2} (\ln k)^{-(1+\varepsilon)} \langle W, \phi_k\rangle_{\mathcal{H}_2}\phi_k.$  In this case one still has $\mathbb{E}(U\mid Z,W)= \mathbb{E}(U\mid Z,\check W)$ and $\check W \in\mathcal{H}_2.$  However,  $\|\check W_i - \check W_j \|_{\mathcal{A}}$ and $\|W_i - W_j \|_{\mathcal{H}_2}$ are no longer equal but in general remain close. Our simulation experiments reveal that in many situations where $\beta=2$ one could confidently
use  $\|W_i - W_j\|_{\mathcal{H}_2}$ instead of $\|\check W_i - \check W_j\|_{\mathcal{A}}$ to build the test.

Finally, with suitable choices, our setup covers also the range $0<\beta<2.$ When
$1<\beta<2,$ one can set $a_k \sim k^{-1}\ln^{-(1+\varepsilon)} k $  and
work with $\| W_i -  W_j \|_{\mathcal{A}}.$ For the case $0<\beta\leq 1$ one could transform $W$ in $\check W = \sum_{k \geq 1} b_k^{-1}\langle W, \phi_k\rangle_{\mathcal{H}_2}\phi_k$ with $b_k=  k^{(1-\beta)/2}\ln^{(1+\varepsilon)/2} k,$ and  take
$a_k \sim k^{\beta-2}\ln^{-2(1+\varepsilon)} k .$ The test is then built with $\| \check W_i -  \check W_j \|_{\mathcal{A}}.$

In summary,  Assumption-(\ref{D})-(ii,iii) represent mild conditions that are satisfied directly, or after simple modifications of the covariate $W,$ in most situations.
\end{proof}

\subsection{The consistency of the test}\label{alt_tn}

Let $(U^0_i,Z_i,W_i)$, $i\geq 1,$ i.i.d. such that $\mathbb{E}(U_{i}^0\mid Z_i,W_i)=0$ almost surely. Here we show that our test  is consistent against any fixed alternative and detect  Pitman alternatives
\[
H_{1n}:\ U_{in}= U^0_i + r_{n}\delta(Z_i,W_i),\quad 1\leq i\leq n, \;n\geq 1,\;
\]
with probability tending to 1, provided that the rate of decrease of the sequence  $\{r_n\}$ satisfies some conditions. These conditions are the same as for nonparametric checks of parametric regression models with finite dimension covariates.

\begin{theor}\label{consist}
Under the Assumptions \ref{D} holds for the $(U^0_i,Z_i,W_i)$'s and Assumption \ref{K} holds too.
Suppose that $\delta (\cdot,\cdot) $ and $r_n$ defining the sequence of  $H_{1n}$ satisfy the conditions
$\mathbb{E}[\delta(Z,W)] = 0,$ $0<\mathbb{E}[\Vert \delta(Z,W)\omega(Z) \Vert_{\mathcal{H}_1}^{4}]<\infty$ and  $r_{n}^{2}nh^{q/2}\rightarrow\infty.$ Then the test based on $T_n$ is consistent against the sequence of alternatives $H_{1n}$.
\end{theor}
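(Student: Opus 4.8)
The plan is to substitute the local alternative $U_{in}=U^0_i+r_n\delta(Z_i,W_i)$ into the numerator $I_n(h)$ and expand the bilinear form $\langle U_{in}\omega(Z_i),U_{jn}\omega(Z_j)\rangle_{\mathcal{H}_1}$. Writing $X_i=(U^0_i,Z_i,W_i)$ and using the symmetries $K_{ij}=K_{ji}$, $\varphi_{ij}=\varphi_{ji}$ together with the symmetry of the inner product, the two mixed contributions coincide and one gets
\begin{equation*}
I_n(h)=I_n^{0}(h)+2r_n I_n^{c}(h)+r_n^2 I_n^{\delta}(h),
\end{equation*}
where $I_n^{0}$ is the statistic built from the null variables $U^0_i$, $I_n^{\delta}$ is the same object with $U^0_i$ replaced by $\delta(Z_i,W_i)$, and $I_n^{c}$ is the U-statistic with kernel $h^{-q}\langle U^0_i\omega(Z_i),\delta(Z_j,W_j)\omega(Z_j)\rangle_{\mathcal{H}_1}K_{ij}(h)\varphi_{ij}$. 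First I would show the denominator is insensitive to the alternative: expanding $v_n^2(h)$ the same way, every summand carrying a factor $r_n$ is of smaller order than the pure $U^0$ part because $r_n\to0$, so $v_n(h)=v_n^{0}(h)\{1+o_{\mathbb{P}}(1)\}$, and from the proof of Theorem~\ref{cons_Tn} the quantity $nh^{q/2}v_n^{0}(h)$ converges in probability to a positive constant (this uses Assumption~\ref{D}(c)(i)). It then suffices to weigh the three numerator pieces against the scale $(nh^{q/2})^{-1}$.

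Next I would treat the three pieces. For the null part, Theorem~\ref{cons_Tn} applied to $(U^0_i,Z_i,W_i)$ gives $I_n^{0}(h)/v_n^{0}(h)\cvd N(0,1)$, whence $I_n^{0}(h)/v_n(h)=O_{\mathbb{P}}(1)$. For the dominant quadratic part, $\mathbb{E}[I_n^{\delta}(h)]$ equals the population quantity $I^\delta(h)$ of Lemma~\ref{lemma_fund} with $U$ replaced by $\delta$; since $\mathbb{E}(\delta\mid Z,W)=\delta$ is not almost surely zero (because $\mathbb{E}[\|\delta(Z,W)\omega(Z)\|^4_{\mathcal{H}_1}]>0$ and $\omega>0$), Lemma~\ref{lemma_fund} yields $I^\delta(h)>0$ for every $h>0$, while Fatou's lemma applied to the Fourier representation preceding the lemma (using $\mathcal{F}[K](th)\to\mathcal{F}[K](0)>0$ as $h\to0$) gives $\liminf_{h\to0}I^\delta(h)\geq c_\delta>0$. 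A routine variance bound for the nondegenerate U-statistic $I_n^{\delta}(h)$, of order $n^{-1}+(n^2h^{q})^{-1}=o(1)$ under Assumption~\ref{K}(b), then gives $I_n^{\delta}(h)=I^\delta(h)+o_{\mathbb{P}}(1)$, so $I_n^{\delta}(h)\geq c_\delta/2$ with probability tending to one and $r_n^2 I_n^{\delta}(h)/v_n(h)$ is of exact order $r_n^2nh^{q/2}\to\infty$.

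The crux is to show the mixed term cannot cancel this divergence. Conditioning on $(Z_i,W_i)$ and invoking $\mathbb{E}(U^0_i\mid Z_i,W_i)=0$ makes $\mathbb{E}[I_n^{c}(h)]=0$ and, more importantly, forces one of the two H\'ajek projections of the symmetrised kernel to vanish identically, while the surviving projection $h_1(X_i)=\langle U^0_i\omega(Z_i),\,\mathbb{E}[\,h^{-q}\delta(Z_j,W_j)\omega(Z_j)K_{ij}\varphi_{ij}\mid Z_i,W_i]\rangle_{\mathcal{H}_1}$ has, under Assumption~\ref{D}(b)(c), variance that stays bounded as $h\to0$. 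The Hoeffding decomposition then gives $I_n^{c}(h)=n^{-1}\sum_{i}h_1(X_i)+o_{\mathbb{P}}(n^{-1/2})$, hence $I_n^{c}(h)=O_{\mathbb{P}}(n^{-1/2})$ and $r_n I_n^{c}(h)/v_n(h)=O_{\mathbb{P}}(r_n n^{1/2}h^{q/2})$. The key bookkeeping is that $r_n^2nh^{q/2}\to\infty$ together with $h\to0$ forces $r_n^2n\to\infty$, so $r_nn^{1/2}\to\infty$ and the mixed contribution is of strictly smaller order than the diverging quadratic term. Collecting the three bounds,
\begin{equation*}
T_n=O_{\mathbb{P}}(1)+O_{\mathbb{P}}(r_n n^{1/2}h^{q/2})+\{c+o_{\mathbb{P}}(1)\}\,r_n^2nh^{q/2},\qquad c>0,
\end{equation*}
so $T_n\cvp+\infty$ and $\mathbb{P}(T_n\geq z_{1-\alpha})\to1$. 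I expect the main obstacle to be making this mixed-term analysis fully rigorous, in particular verifying that the surviving H\'ajek projection has bounded variance uniformly in the vanishing bandwidth and that the degenerate remainder is genuinely $o_{\mathbb{P}}(n^{-1/2})$, which is precisely what the condition $nh^{q}/\ln n\to\infty$ is there to secure.
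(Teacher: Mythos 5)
Your proposal follows essentially the same route as the paper's proof: the identical three-term decomposition of $I_n(h)$ into null, mixed and quadratic parts, with the null part controlled by Theorem \ref{cons_Tn}, the quadratic part shown to have a strictly positive limiting mean via the Fourier/positive-definiteness argument of Lemma \ref{lemma_fund} together with dominated convergence, the mixed term handled through a mean-zero variance bound, and the condition $r_n^2 n h^{q/2}\rightarrow\infty$ doing the final bookkeeping. Your H\'ajek-projection treatment of the mixed term is just a more explicit (and in fact slightly sharper) version of what the paper dispatches as ``standard calculations,'' so no substantive difference.
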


The zero mean condition for $\delta(\cdot)$  keeps $U_{in}$ of zero mean under the alternative hypotheses $H_{1n}.$ The proof is based on  standard arguments and is relegated to the appendix.

\subsection{Goodness-of-fit test}\label{as_tnhat}

In this section we provide some guidelines on how our test could be used for testing the goodness-of-fit of functional data models. The detailed investigation of specific situations depend on the model and could not be considered in a unified framework.

In many situations, the $U_i$'s are not observed and one has to replace them by some $\widehat U_i$ obtained as residuals of some models. In this case one cannot build $T_n$ and has to work with the statistic $\widehat T_n$ defined in equation (\ref{test_stat_hat}) instead. In section \ref{emp_evid} we use some simulation experiments to show that our test could still perform well in such situations, especially with a bootstrap correction, as described in the following, when the sample size is not large enough.

From the theoretical point of view, one shall expect that the asymptotic standard normal critical values are still valid and the test is still consistent, provided that the difference $\widehat U_i - U_i$ could be controlled in a suitable way. Indeed, using the notation from section (\ref{sec_def_test}) and considering the simple case where $\omega(\cdot) \equiv 1$, we can write
\begin{eqnarray*}
\widehat I_n (h) &=& I_n (h) \\
&&+  \frac{2}{n(n-1)h^{q}}\sum\limits _{1\leq i\neq j\leq n}\left\langle U_{i}, \;\widehat U_{j} - U_{j} \right\rangle_{\mathcal{H}_1} K_{ij}(h)
\varphi_{ij}\\
&&+ \frac{1}{n(n-1)h^{q}}\sum\limits _{1\leq i\neq j\leq n}\left\langle \widehat  U_{i} - U_{i}, \;\widehat U_{j} - U_{j} \right\rangle_{\mathcal{H}_1} K_{ij}(h)
\varphi_{ij}\\
&\stackrel{def}{=}& I_n (h) + R_{1n} (h)+ R_{2n} (h) .
\end{eqnarray*}
Next, one has to control $\widehat U_i - U_i$ and hence $R_{1n} (h)$ and $R_{2n} (h)$ and this strongly depends on the specific model considered. Many functional data models would allow to show that $R_{1n} (h)$ and $R_{2n} (h)$ are negligible under reasonable conditions in the model (regularity conditions on the model parameter and  the functional covariate $W$) and for suitable rates of the bandwidth.
For instance, \cite{Patilea2012a} investigated in detail the case of linear model with scalar responses. Their investigation could be adapted to our test and obtain similar conclusions.
In the case of a functional linear model with $L^2 [0,1]$ responses and finite and infinite dimension covariates one would observe a sample of $(Y,Z,W)$ where
$$
Y(t) = Z^\top \beta + \langle\xi(t,\cdot), W\rangle_{\mathcal{H}_2} + U (t), \qquad t\in[0,1].
$$
Since $\beta$ is expected to be estimated at parametric rate, the control of $\widehat U_i - U_i$ would depend on the conditions on the rate of convergence of $\widehat \xi(\cdot,\cdot),$ the estimate of $\xi(\cdot,\cdot).$ Under suitable but mild conditions, one could expect the rate of $R_{1n} (h)$ to be of order $n^{-1}h^{-q/2}$ times the norm of
$\widehat \xi(\cdot,\cdot) - \xi(\cdot,\cdot),$ while the rate of $R_{2n} (h)$ to given by the square of
the norm of
$\widehat \xi(\cdot,\cdot) - \xi(\cdot,\cdot).$ Meanwhile, the rate of $I_n(h)$ is $O_{\mathbb{P}}(n^{-1}h^{-p/2}).$ The required restrictions on the bandwidth to preserve the asymptotic standard normal critical values follow. Let us point out that
slower rates for
the norm of
$\widehat \xi(\cdot,\cdot) - \xi(\cdot,\cdot)$
will require faster decreases for $h,$
and this will result in a loss of power against sequences of local alternatives.

\subsection{Bootstrap  critical values}

To correct the finite sample critical values let us propose a simple wild bootstrap procedure. The bootstrap sample, denoted by $U_i^\star$ , $1\leq i\leq n$, is defined as  $U_i^\star = \zeta_i U_i$, $1\leq i\leq n$, where $\zeta_i$, $1\leq i\leq n$ are independent random variables following the two-points distribution proposed by \cite{Mammen1993}. That means $\zeta_i=-(\sqrt{5}-1)/2$ with probability $(\sqrt{5}+1)/(2\sqrt{5})$ and $\zeta_i=(\sqrt{5}+1)/2$ with probability $(\sqrt{5}-1)/(2\sqrt{5})$. A bootstrap test statistic $T_n^\star$ is built from a bootstrap sample as was the original test statistic.
When this scheme is repeated many times, the bootstrap critical value $z^\star_{1-\alpha, n}$ at level $\alpha$ is the empirical $(1-\alpha)-$th quantile of the bootstrapped test statistics. The asymptotic validity of this bootstrap procedure is guaranteed by the following result. It states that the bootstrap critical values are asymptotically standard normal under the null hypothesis and under the alternatives like in section \ref{alt_tn}. The proof could be obtained by rather standard modifications of the proof of Theorem \ref{cons_Tn} and hence will be omitted.

\begin{theor}\label{bbot}
Suppose that the conditions of Theorem \ref{consist} hold true, in particular in the case $r_n\equiv 0.$ Then
$$
\sup_{x\in\mathbb{R}} \left| \mathbb{P}\left( T_n^\star \leq x \mid U_1,Z_1,W_1,\cdots, U_n,Z_n,W_n \right) - \mathbb{P}( T_n \leq x ) \right| \rightarrow 0,\qquad \text{in probability.}
$$
\end{theor}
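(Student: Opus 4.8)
The plan is to show that the wild bootstrap statistic $T_n^\star$ behaves, conditionally on the data, exactly like $T_n$ does unconditionally under the null, by mimicking the proof of Theorem \ref{cons_Tn}. The key structural observation is that multiplying each $U_i$ by the independent multiplier $\zeta_i$ produces a centered conditional problem: since $\mathbb{E}[\zeta_i]=0$ and $\mathbb{E}[\zeta_i^2]=1$ by the choice of Mammen's two-point distribution, the bootstrap inner products $\langle \zeta_i U_i\omega(Z_i),\zeta_j U_j\omega(Z_j)\rangle_{\mathcal{H}_1} = \zeta_i\zeta_j\langle U_i\omega(Z_i),U_j\omega(Z_j)\rangle_{\mathcal{H}_1}$ have conditional mean zero for $i\neq j$ (because $\mathbb{E}[\zeta_i\zeta_j\mid \text{data}]=0$), so that $\mathbb{E}[I_n^\star(h)\mid\text{data}]=0$ whether or not the null holds for the original data. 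This is precisely why the bootstrap reproduces the null-distribution behavior even under fixed or Pitman alternatives, and it explains the phrasing of the theorem.

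First I would write $T_n^\star = I_n^\star(h)/v_n^\star(h)$ explicitly, where $I_n^\star(h)$ is the $U$-statistic formed from the $\zeta_i U_i\omega(Z_i)$ and $v_n^\star(h)$ is its estimated standard deviation, and condition throughout on $\mathcal{D}_n=(U_1,Z_1,W_1,\ldots,U_n,Z_n,W_n)$. Treating $I_n^\star(h)$ as a degenerate $U$-statistic in the multipliers $\zeta_i$, I would apply a conditional central limit theorem for such quadratic forms — the natural tool is the martingale CLT of de Jong for clean (completely degenerate) quadratic forms, which is exactly the device underlying the proof of Theorem \ref{cons_Tn}. The verification reduces to checking two conditions conditionally on $\mathcal{D}_n$: that the conditional variance of $I_n^\star(h)$, suitably normalized by $v_n^\star(h)^2$, converges to $1$, and that a fourth-moment (Lyapunov-type) ratio tends to $0$, both in probability over the draw of $\mathcal{D}_n$. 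The ratio form $T_n^\star=I_n^\star/v_n^\star$ is self-normalizing, which makes the variance condition nearly automatic once one shows the relevant data-dependent kernel sums concentrate.

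The core computation is to control the data-dependent coefficients $c_{ij}=h^{-q}\langle U_i\omega(Z_i),U_j\omega(Z_j)\rangle_{\mathcal{H}_1}K_{ij}(h)\varphi_{ij}$, since $I_n^\star(h)=\{n(n-1)\}^{-1}\sum_{i\neq j}\zeta_i\zeta_j c_{ij}$ and $v_n^\star(h)^2$ is the corresponding sum of $c_{ij}^2$ (because $\zeta_i^2\zeta_j^2$ has conditional mean $1$, the bootstrap variance estimator has the same form as $v_n(h)^2$). The conditional variance of $I_n^\star(h)$ given $\mathcal{D}_n$ equals $2\{n(n-1)\}^{-2}\sum_{i\neq j}c_{ij}^2$, which is exactly $v_n^\star(h)^2$, so the normalized conditional variance is identically $1$ and no concentration argument is even needed there. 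The substantive work is the conditional Lyapunov condition: one must show that the fourth-order cumulant sums, divided by $(\sum_{i\neq j}c_{ij}^2)^2$, vanish in probability. Under Assumption \ref{D} these moment sums are controlled by the same bandwidth bookkeeping as in Theorem \ref{cons_Tn}, with the moment bounds in Assumption-(\ref{D})-(c) furnishing the needed integrability; the law of large numbers for the denominator $v_n^\star(h)^2$ then matches its deterministic counterpart.

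The main obstacle will be handling the infinite-dimensional factor $\varphi_{ij}=\exp(-\|W_i-W_j\|_{\mathcal{A}}^2/2)$ inside the conditional moment estimates so that the de Jong conditions hold \emph{conditionally} with probability tending to one, rather than merely in expectation. Because the multipliers $\zeta_i$ are independent of the $W_i$, the $\varphi_{ij}$ enter only through the fixed coefficients $c_{ij}$, so the difficulty is the usual one of bounding expectations of products of $\varphi_{ij}$ across overlapping index pairs; this is where Assumption-(\ref{D})-(ii,iii), guaranteeing $\|\cdot\|_{\mathcal{A}}$ is well defined and the inverse Fourier representation from Lemma \ref{lemma_fund} applies, does the work exactly as in the unconditional proof. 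Finally, to pass from the conditional CLT for $T_n^\star$ to the uniform statement $\sup_x|\mathbb{P}(T_n^\star\leq x\mid\mathcal{D}_n)-\mathbb{P}(T_n\leq x)|\to 0$ in probability, I would invoke the continuity of the standard normal limit together with P\'olya's theorem, noting that Theorem \ref{cons_Tn} already gives $\mathbb{P}(T_n\leq x)\to\Phi(x)$ uniformly, so both terms converge to $\Phi$ and the supremum of their difference vanishes.
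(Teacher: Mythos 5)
Your proposal is correct and follows essentially the route the paper itself indicates: the paper omits the proof of Theorem \ref{bbot}, stating only that it follows by standard modifications of the proof of Theorem \ref{cons_Tn}, and your plan --- applying de Jong's CLT for degenerate quadratic forms conditionally on the data to the multiplier sum $\sum_{i\neq j}\zeta_i\zeta_j c_{ij}$, exploiting $\mathbb{E}[\zeta_i]=0$, $\mathbb{E}[\zeta_i^2]=1$ to get conditional centering even under alternatives, and finishing with P\'olya's theorem --- is exactly that modification. One small imprecision: since $v_n^{\star 2}(h)$ is built from the $\zeta_iU_i$ it contains the factors $\zeta_i^2\zeta_j^2$ and is therefore not \emph{identically} equal to the conditional variance $2\{n(n-1)h^q\}^{-2}\sum_{i\neq j}c_{ij}^2$, but only asymptotically equivalent to it via the conditional law of large numbers you invoke later, so the concentration step you first dismiss is in fact needed (and routine).
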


\setcounter{equation}{0}
\section{The error in covariates case}
\label{est_basis}

In this section we show that our testing procedure extends to the case where the covariates are observed with error. In some applications, the observations $Z_i$ and $W_i$ are not directly observed but could be  estimated by some $\widehat Z_i$ and $\widehat W_i$ computed from the data.
To better illustrate the methodology, let us focus on the test for the effect of a functional variable. For this reason, in this section let us take $q=1,$ $Z = \langle \widetilde W, \phi_1  \rangle_{\mathcal{H}_2}$ and $W = \widetilde W - \langle \widetilde W, \phi_1 \rangle_{\mathcal{H}_2} \phi_1,$ where $\widetilde W \in \mathcal{H}_2$
and $\phi_1, \phi_2, \cdots $ are the elements of an orthonormal basis in  $\mathcal{H}_2.$

In functional data analysis where usually $\mathcal{H}_2 = L^2[0,1]$ the choice of the basis is a key point.
The statistician would likely prefer a basis allowing an accurate representation of $\widetilde W$ with a minimal number of basis elements.
A commonly used basis is given by the eigenfunctions of the covariance operator $\mathcal{K}$ that is defined by
$
(\mathcal{K} v)(\cdot) = \int \mathcal{K} (\cdot,s) v(s) ds,$ $v\in L^2[0,1],
$
where $\widetilde W $ is supposed to satisfy  $\int \mathbb{E}(\widetilde W ^2(t)) dt < \infty$ and $$\mathcal{K}(t,s) = \mathbb{E}[ \{\widetilde W (t) - \mathbb{E}(\widetilde W (t))  \}  \{ \widetilde W (s) - \mathbb{E}(\widetilde W (s)) \}] $$ is supposed positive definite. Let $\theta_1, \theta_2 , \cdots $ denote the  eigenvalues of $\mathcal{K}$ and let $\mathcal{R}=\{\phi_1,\phi_2,\cdots\}$ be the corresponding basis of eigenfunctions  that are usually  called the functional principal components (FPC). The FPC  orthonormal basis  provide optimal (with respect to the mean-squared error) low-dimension representations of $\widetilde W .$ See, for instance, \cite{Ramsay2005}.
In most of the applications the FPC are unknown and has to be estimated from
\begin{equation*}
(\widehat {\mathcal{K}} v)(t) = \int_{[0,1]} \widehat {\mathcal{K}}(t,s) v(s) ds ,\qquad t\in [0,1],
\end{equation*}
where
\begin{equation}\label{K_hat}
\widehat {\mathcal{K}}(t,s) = n^{-1} \sum_{i=1}^n  \left\{\widetilde W _i(t) - n^{-1} \sum_{j=1}^n  \widetilde W _j(t)  \right\}  \left\{ \widetilde W _i(s) - n^{-1} \sum_{j=1}^n  \widetilde W _j(s) \right\} .
\end{equation}
Let $\widehat \theta_1, \widehat \theta_2 , \cdots\geq 0$ denote the eigenvalues of $\widehat {\mathcal{K}}$ and let $\widehat \phi_1,\widehat \phi_2,\cdots$ be the corresponding basis of eigenfunctions, that is the estimated FPC. For identification purposes, we adopt the usual  condition  $\langle  \phi_j, \widehat \phi_j\rangle \geq 0,$ $\forall j$. Now, we can define  $\widehat Z_i = \langle \widetilde W_i, \widehat \phi_1  \rangle_{\mathcal{H}_2} $ and $\widehat W_i = \widetilde W_i - \langle \widetilde W_i, \widehat \phi_1 \rangle_{\mathcal{H}_2} \widehat \phi_1,$ the estimates of $Z_i$ and $W_i.$

Having in mind such types of situations, herein we will suppose that
\begin{equation}\label{approx_cond}
\|\widehat Z_i - Z_i \| + \|\widehat W_i - W_i \|_{\mathcal{H}_2} \leq \Gamma_i \Delta_n,\qquad 1\leq i\leq n,
\end{equation}
where $\Gamma_i $ are independent copies of some random variable $\Gamma$ that depend on $X,$ and $\Delta_n$ depend on the data but could be taken the same for all $i.$  For $\Delta_n$ and $\Gamma$ we will suppose
\begin{equation}\label{exp_cond}
\Delta_n = O_{\mathbb{P}}(n^{ -1/2})\qquad \text{ and } \qquad\exists a>0 \quad \text{ such that } \quad \mathbb{E}\exp(a \Gamma) <\infty.
\end{equation}
Clearly, alternative conditions on the rate of $\Delta_n$ and the moments of $\Gamma$ could be considered, resulting in alternative conditions on the bandwidths in the statements below. As it will be explained below, the conditions (\ref{exp_cond}) are convenient for the example of $\widehat Z_i$ and $\widehat W_i$ obtained from estimated FPC basis.

Let us introduce some notation
\begin{equation}\label{def_phi_hat}
\widehat K_{ij}(h) = K((\widehat Z_i - \widehat Z_j)/h), \qquad \widehat \varphi_{ij} = \exp (-\|\widehat W_i - \widehat W_j\|_{\mathcal{A}}^2/2).
\end{equation}

Let
\begin{equation*}
\widetilde T_n = \frac{\widetilde I_n(h)}{\widetilde v_{n}(h)}.
\end{equation*}
where $\widetilde I_n(h)$ and $\widetilde v_{n}(h)$ are defined as $I_n(h)$ and $v_{n}(h)$ in section \ref{sec_def_test} but with $\widehat Z_i$ and $\widehat W_i$ instead of $Z_i$ and $W_i.$

\begin{theor}\label{test_estim}
Suppose that $q=1,$ the Assumptions \ref{D}-(a),  \ref{D}-(b)-(ii, iii), \ref{K}-(a)  are met and conditions (\ref{approx_cond}) and (\ref{exp_cond}) hold true. Assume one of the following conditions is met:
\begin{enumerate}
\item   $nh^{4}/\ln^{2} n \rightarrow \infty$ and  $\int_{\mathbb{R}^q} |\mathcal{F}[f_Z]|^{2-a}(t) dt <\infty$ for some $a\in (0,1];$
\item $nh^{2}/\ln^2 n \rightarrow \infty$ and $f_Z$ is bounded;
\item $nh/\ln n \rightarrow \infty$  and $f_Z$ is uniformly continuous.
\end{enumerate}
Then the Theorems \ref{cons_Tn},  \ref{consist} and \ref{bbot} remain valid with the test statistic $T_n$ replaced by $\widetilde T_n.$
\end{theor}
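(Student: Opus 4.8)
The plan is to show that replacing $(Z_i,W_i)$ by the estimated $(\widehat Z_i,\widehat W_i)$ perturbs the studentized statistic negligibly, i.e. that $\widetilde T_n - T_n \rightarrow 0$ in probability under the null, and analogously under the fixed and Pitman alternatives of Theorem \ref{consist} and under the bootstrap scheme of Theorem \ref{bbot}; the three conclusions then follow from the corresponding statements already established for $T_n$ together with Slutsky's lemma. Writing $\widetilde T_n - T_n = \{(\widetilde I_n - I_n)/v_n\}\,(v_n/\widetilde v_n) + T_n\,(v_n/\widetilde v_n - 1)$ and recalling that $T_n = O_{\mathbb{P}}(1)$ and, for $q=1$, $v_n = O_{\mathbb{P}}(n^{-1}h^{-1/2})$, it suffices to prove $(\widetilde I_n - I_n)/v_n \rightarrow 0$ and $\widetilde v_n/v_n \rightarrow 1$ in probability. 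Throughout I would work on the event $\mathcal{E}_n = \{\Delta_n \leq M n^{-1/2}\} \cap \{\max_{1\leq i\leq n}\Gamma_i \leq (2/a)\ln n\}$, which has probability tending to $1$ by (\ref{exp_cond}): the exponential moment bound and a union bound give $\max_i\Gamma_i = O_{\mathbb{P}}(\ln n)$. On $\mathcal{E}_n$, condition (\ref{approx_cond}) yields the uniform control $\max_i(|\widehat Z_i - Z_i| + \|\widehat W_i - W_i\|_{\mathcal{H}_2}) = O(n^{-1/2}\ln n)$, and it is exactly these $\ln n$ factors that propagate into the bandwidth conditions.

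For the numerator I would decompose, taking $\omega\equiv 1$ for clarity (a Lipschitz $\omega$ adds a strictly analogous, $h$-free term), $\widehat K_{ij}\widehat\varphi_{ij} - K_{ij}\varphi_{ij} = (\widehat K_{ij}-K_{ij})\widehat\varphi_{ij} + K_{ij}(\widehat\varphi_{ij}-\varphi_{ij})$, so that $\widetilde I_n - I_n = R_n^K + R_n^\varphi$ for the two corresponding bilinear forms in the $U_i$'s. The decisive observation is that under (\ref{quang}) the inner products $\langle U_i,U_j\rangle_{\mathcal{H}_1}$ are, conditionally on all covariates, centred and independent across distinct pairs; hence each of $R_n^K,R_n^\varphi$ has zero conditional mean and must be controlled through its conditional variance, \emph{not} through $\sum_{i\neq j}|\langle U_i,U_j\rangle_{\mathcal{H}_1}|$, a crude bound that would diverge. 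For $R_n^\varphi$ I would use $|e^{-x}-e^{-y}|\leq|x-y|$, the bound $\|\cdot\|_{\mathcal{A}}\leq(\sup_k a_k)^{1/2}\|\cdot\|_{\mathcal{H}_2}$ (finite since $\sum_k a_k<\infty$), and the identity $\|a\|^2-\|b\|^2=\langle a-b,a+b\rangle$ to obtain $|\widehat\varphi_{ij}-\varphi_{ij}| \leq C(\Gamma_i+\Gamma_j)\Delta_n\,\{\|W_i-W_j\|_{\mathcal{A}} + (\Gamma_i+\Gamma_j)\Delta_n\}$; inserting this into the conditional variance and invoking Assumption \ref{D}-(c) shows $R_n^\varphi = O_{\mathbb{P}}(n^{-3/2}h^{-1/2}\ln n) = o_{\mathbb{P}}(v_n)$, with no extra bandwidth restriction.

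The term $R_n^K$ is the heart of the matter and I expect it to be the \emph{main obstacle}: the argument of $K$ is divided by $h$, so the perturbation is amplified and naive Lipschitz bounds diverge. Bounding $R_n^K$ by its conditional standard deviation reduces the task to estimating $\mathbb{E}[(\widehat K_{ij}(h)-K_{ij}(h))^2]$. On $\mathcal{E}_n$ the shift of the argument is $|\tau_{ij}| = |(\widehat Z_i-Z_i)-(\widehat Z_j-Z_j)|/h \leq (\Gamma_i+\Gamma_j)\Delta_n/h$, which vanishes under the bandwidth hypotheses, and I would estimate the $L^2$ modulus of continuity $\int(K(u+\tau)-K(u))^2\,du$ via the bounded variation of $K$, respectively via Plancherel and the integrability of $\mathcal{F}[K]$ (Assumption \ref{K}-(a)). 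After the change of variables $u=(z-z')/h$ the remaining factor is $\int f_Z(z'+hu)f_Z(z')\,dz'$, and the way this factor is handled produces the three regimes: bounding it by $\|f_Z\|_\infty$ when $f_Z$ is bounded gives $nh^{2}/\ln^{2}n\to\infty$; using uniform continuity of $f_Z$ to replace it by $\int f_Z^2 + o(1)$ sharpens the estimate down to the baseline $nh/\ln n\to\infty$; while for a merely Fourier-integrable $f_Z$ one passes to the Fourier side and bounds the integral by $\int|\mathcal{F}[f_Z]|^{2-a}$, which is weaker and forces $nh^{4}/\ln^{2}n\to\infty$. In each case one obtains $R_n^K = o_{\mathbb{P}}(v_n)$.

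For the denominator, $\widetilde v_n^2 - v_n^2$ is a sum of the nonnegative terms $\langle U_i,U_j\rangle_{\mathcal{H}_1}^2(\widehat K_{ij}^2\widehat\varphi_{ij}^2 - K_{ij}^2\varphi_{ij}^2)$, so there is no centring to exploit and I would bound it directly in expectation; writing $\widehat K^2-K^2=(\widehat K-K)(\widehat K+K)$ and using the $L^1$ modulus bound $\int|K(u+\tau)-K(u)|\,du\leq V(K)|\tau|$ (again split into the three density regimes) yields $(\widetilde v_n^2-v_n^2)/v_n^2\rightarrow0$, hence $\widetilde v_n/v_n\rightarrow1$ in probability, completing the analogue of Theorem \ref{cons_Tn}. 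Under a fixed alternative $T_n\rightarrow+\infty$ while the perturbations are of strictly smaller order, so $\widetilde T_n\rightarrow+\infty$ and consistency transfers; under the Pitman alternatives $U_{in}=U^0_i+r_n\delta(Z_i,W_i)$ I would repeat the numerator analysis for the conditionally centred $U^0_i$ and check that the additional $r_n$- and $r_n^2$-terms are negligible because $r_n\rightarrow0$, giving the analogue of Theorem \ref{consist}. Finally, since the wild bootstrap multipliers $\zeta_i$ are independent of the data and of the covariate estimation errors, the same estimates applied conditionally on the sample deliver the analogue of Theorem \ref{bbot}.
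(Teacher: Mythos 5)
Your route is genuinely different from the paper's. The paper never shows $\widetilde T_n-T_n\to 0$: it proves (Lemma \ref{diff_l2b}) the uniform row-sum bounds $\max_i (nh)^{-1}\sum_j\bigl|[\widehat K_{ij}\widehat\varphi_{ij}]^l-[K_{ij}\varphi_{ij}]^l\bigr|=o_{\mathbb{P}}(1)$ for $l=1,2$, which are precisely what is needed to re-verify the de Jong conditions (\ref{clt1})--(\ref{clt3}) and the variance bounds in the proof of Theorem \ref{cons_Tn} with the hatted weights, so the CLT is simply re-run for $\widetilde I_n/\widetilde v_n$ itself. Your perturbation argument instead requires the much finer estimate $\widetilde I_n-I_n=o_{\mathbb{P}}(n^{-1}h^{-1/2})$, and you correctly identify that this forces you to exploit the conditional centring of the $U_i$'s and work with conditional variances. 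That part of the plan, together with your treatment of $R_n^{\varphi}$, of regime (2), and of the denominator, is sound. (For regime (1) the paper uses only the crude uniform Lipschitz bound $Ch^{-2}\Delta_n\max_i\Gamma_i=O_{\mathbb{P}}(n^{-1/2}h^{-2}\ln n)$, with no density information inside the lemma; your Plancherel variant is different but not obviously wrong.)

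The genuine gap is your regime (3). You bound the conditional variance of $R_n^K$ by factoring $\mathbb{E}[(\widehat K_{ij}-K_{ij})^2]$ into the $L^2$ modulus $\int (K(u+\tau)-K(u))^2\,du$ times a density factor, and you claim that upgrading the density factor from $\|f_Z\|_\infty$ to $\int f_Z^2+o(1)$ via uniform continuity improves the bandwidth requirement from $nh^{2}/\ln^{2}n\to\infty$ to $nh/\ln n\to\infty$. It cannot: both bounds on the density factor are $O(1)$, so the rate is unchanged, and the bottleneck remains the kernel modulus, which for a bounded-variation $\tilde K$ is of exact order $|\tau|$ with $\tau\asymp\Delta_n\max_i\Gamma_i/h\asymp n^{-1/2}h^{-1}\ln n$; the resulting ratio of the conditional variance to $v_n^2$ is of order $n^{-1/2}h^{-1}\ln n$, which diverges under regime (3) (take $h\asymp n^{-3/4}$). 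The improvement that regime (3) actually buys comes from \emph{not} factoring: sandwich $K$ between monotone components shifted by $\pm 2b_n/h$ and change variables so that the shift $2b_n/h$ in the argument of $K_1$ becomes a shift $2b_n$ in the argument of $f_Z$; then $\mathbb{E}\bigl[\,\bigl|K_{1h}(Z_i-Z_j+2b_n)-K_{1h}(Z_i-Z_j)\bigr|\mid Z_i\bigr]\le h\sup_t|f_Z(t+2b_n)-f_Z(t)|\int K_1$, which is $h\cdot o(1)$ with no dependence on $h$ in the small factor. This is exactly how the paper handles its bias term $A_{4n,i}$ and it is the only place where uniform continuity of $f_Z$ earns the weaker condition $nh/\ln n\to\infty$ (the remaining fluctuation terms are controlled by Bernstein's inequality under that same condition). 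With this coupling step inserted, your conditional-variance bound for $R_n^K$ and your expectation bound for $\widetilde v_n^2-v_n^2$ do go through in regime (3); without it, your stated argument establishes the theorem only under $nh^{2}/\ln^{2}n\to\infty$.
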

\medskip

The proof of Theorem \ref{test_estim} is a direct consequence of  Lemma \ref{diff_l2b} in the Appendix and is hence omitted.

Let us revisit the problem of the test for the effect of a functional variable,
where $Z = \langle \widetilde W, \phi_1  \rangle_{\mathcal{H}_2}$ and $W = \widetilde W - \langle \widetilde W, \phi_1 \rangle_{\mathcal{H}_2} \phi_1.$ The conditions on the random variable $Z$ required in Lemma \ref{diff_l2b} are mild conditions satisfied in the common examples of functional covariates considered in the literature. Concerning condition (\ref{approx_cond}), consider the operator norm $\| \mathcal{K} \|_{S}$ defined by $$\| \mathcal{K} \|^2_{S}= \int \int \sigma^2(t,s)dt ds.$$ Under Assumptions \ref{D}-(a),  \ref{D}-(b)-(ii), the empirical covariance operator satisfies
$$
\|\widehat {\mathcal{K}} - \mathcal{K} \|_{S} = O_{\mathbb{P}}(1/\sqrt{n}),
$$
see for instance \cite{Bosq2000} or  \cite{Horvath2012}.
On the other hand, suppose that $\theta_1,$ the eigenvalue associated to $\phi_1,$ is different from all the others eigenvalues of the operator $\mathcal{K}.$ By Lemma 4.3 in \cite{Bosq2000} or Lemma 2.3 in \cite{Horvath2012}, and the fact that the spectral norm of the operator $\widehat {\mathcal{K} } - \mathcal{K} $ is smaller or equal to  $\|\widehat {\mathcal{K}} - \mathcal{K} \|_{S}$,
$$
  \| \widehat\phi_1 - \phi_1 \|^2 \leq \frac{8}{\varsigma^2} \|\widehat {\mathcal{K}} - \mathcal{K} \|_{S}^2,
$$
where $\varsigma $ is the distance between $\theta_1$ and the set $\{\theta_2,\theta_3,\cdots\}$ of all the other eigenvalues of $\mathcal{K}.$ Here the eigenvalues of $\mathcal{K}$ are not necessarily ordered, $\theta_1$ could be any eigenvalue separated from all the others. Deduce that condition (\ref{approx_cond}) is guaranteed for instance if there exists $a>0$ such that
$$
\mathbb{E}\exp(a\| \widetilde W\|_{\mathcal{H}_2}) <\infty.
$$
The exponential moment condition is met if, for instance,  $\widetilde W$ is a mean-zero Gaussian process defined on the unit interval with  $\sup_{t\in[0,1]} \mathbb{E} [\widetilde W ^2(t)]<\infty ;$ see chapter A.2 in \cite{Vaart1996}. Moreover, in general, a moment restriction on $\widetilde W$ is not restrictive for significance testing. Indeed, if $\widetilde W$ does not satisfy such a condition, it suffices to transform  $\widetilde W$ into some variable  $\in \mathcal{H}_2$ such that  $\widetilde V$ generates the same $\sigma-$field  and  $\widetilde V$ satisfies the required moment condition.

\setcounter{equation}{0}
\section{Empirical evidence}\label{emp_evid}

In this section we illustrate the empirical performances of our  testing procedure. For that purpose,
we consider both scalar and functional responses $Y$. We used an Epanechnikov kernel in our applications, that is $K\left(x\right)=0.75\left(1-x^{2}\right)\mathbf{1}\left\{ \left|x\right|<1\right\}. $ We calculated  $\varphi_{ij}$ and $\widehat \varphi_{ij}$
in two ways: with the norm in the Hilbert space $L^2[0,1]$ of the  covariate and with the norm $\|\cdot\|_{\mathcal{H}}$ proposed in Remark 1 for the case $\beta=2.$

Below $\left\langle \cdot,\cdot\right\rangle $
is the usual inner product on $L^{2} \left[0,\,1\right], $ that is
$
\left\langle f,\, g\right\rangle =\intop_{0}^{1}f\left(t\right)g\left(t\right)dt.
$
Let   $\hat{\mathcal{K}}$ be the
empirical covariance operator defined in (\ref{K_hat}) and let  $\hat{\theta}_{1}\geq\hat{\theta}_{2}\geq\dots\geq0$ be its eigenvalues
and $\widehat\phi_{1}$, $\widehat\phi_{2}$, $\dots$ the corresponding eigenfunctions.

\subsection{The scalar response case}

We simulate data samples of size $n=40$ using the models
\begin{eqnarray}
Y_{i} & = & a+\left\langle X_{i},\, b\right\rangle +\delta\left\langle X_{i},\, b\right\rangle ^{2}+ U_{i},\label{eq:scalY}\\
Y_{i} & = & \left(\lambda_k^{-1}\left\langle X_i , \,e_k\right\rangle^2 - 1\right) + U_{i},\qquad\qquad 1\leq i\leq n,\label{eq:scalYfar}
\end{eqnarray}
where $X_{i}$ is a Wiener process, $U_{i}$ are independent centered normal variables with variance
$\sigma^2= 1/16$,
$$
a=0 \qquad \text{and} \qquad b\left(t\right)=\sin^{3}\left(2\pi t^{3}\right), \quad t\in\left[0,\,1\right].
$$
Moreover,
$$
e_k\left(t\right)=\sqrt{2}\sin\left(\left(k-1/2\right)\pi t\right) ,\quad
t\in[0,1],
$$
and $\lambda_k=(k-1/2)^{-2}\pi^{-2}$ and $k$ is some fixed positive integer.
The null hypothesis corresponds to $\delta=0$ while nonnegative $\delta$'s yield quadratic alternatives.

We then estimate $b$ using the functional principal component approach, see see, e.g., \cite{Ramsay2005} and \cite{Horvath2012}. The first five  principal
components of the $X_{i}$s are used so that $b$ is estimated by
\[
\widehat{b}\left(t\right)=\sum_{j=1}^{5}\widehat{b}_{j}\widehat \phi_{j}\left(t\right),
\]
where $\widehat{b}_{j}=\widehat{\theta}_{j}^{-1}\widehat{g}_{j}$, $\widehat{g}_{j}=\left\langle \widehat{g,}\,\widehat{\phi}_{j}\right\rangle $
with
\[
\widehat{g}\left(t\right)=\dfrac{1}{n}\sum_{i=1}^{n}\left(Y_{i}-\overline{Y}_{n}\right)\left(X_{i}\left(t\right)-\overline{X}_{n}\left(t\right)\right)
\]
and $a$ by $\widehat{a}=\overline{Y}_{n}-\left\langle \overline{X}_{n},\,\widehat{b}\right\rangle $.
The test statistics are built with $q=1,$
$$
\widehat{U}_{i}=Y_{i}-\widehat{a}-\left\langle X_{i},\,\widehat{b}\right\rangle, \quad \widehat{Z}_{i}=\dfrac{\widetilde{Z}_{i}}{\sqrt{n^{-1}\sum_{j=1}^{n}\widetilde{Z}_{j}^{2}}} \quad\mbox{ and } \quad\widehat{W}_{i}=\dfrac{\widetilde{W}_{i}}{\sqrt{n^{-1}\sum_{j=1}^{n}\Vert \widetilde{W}_{j}\Vert_{\mathcal{H}_2} ^{2}}}\;,
$$
where
$
\tilde Z_i = \langle X_i, \widehat \phi_{1}\rangle$ and $\tilde W_i = X_i -  \langle X_i, \widehat \phi_{1}\rangle \widehat \phi_{1} .
$

First, we investigate the accuracy of the asymptotic critical values and the effectiveness
of the  bootstrap correction, with $199$ bootstrap samples, for level $\alpha=10\%$.
Several bandwidths are considered, that is $h=cn^{-1/5}$ with $c\in\{2^{k/2},\,k=-2,-1,0,1,2\}$.
The results of 5000 replications are plotted in the left panel of Figure \ref{fig:Level}.
The  normal critical values are quite inaccurate, while the bootstrap corrections are very effective,
whatever the considered bandwidth is. The differences between the results for the statistics defined with $\|\cdot\|_{\mathcal{H}_2}$ and those for the statistics defined with $\|\cdot\|_{\mathcal{A}}$ are imperceptible.

Next, we compare our test to the one introduced by  \cite{Patilea2012a} (hereafter PSSa) based on projections.  The test statistic of PSSa is
\[
T_{n}^{PSSa} = \dfrac{Q_{n}(\widehat{\gamma}_{n};\,\widehat{a},\,
\widehat{b})}{\widehat{v}_{n}(\widehat{\gamma}_{n};\,\widehat{a},\,\widehat{b})}
\]
where
\[
Q_{n}(\gamma;\,\widehat{a},\,\widehat{b})=\dfrac{1}{n\left(n-1\right)}\sum_{1\leq i\neq j\leq n}\widehat{U}_{i}\widehat{U}_{j}\dfrac{1}{h}K\left(h^{-1}\left\langle X_{i}-X_{j},\,\gamma\right\rangle \right),\quad \gamma\in\mathbb{R}^p,
\]
and $\widehat{v}_{n}^{2}(\gamma;\,\widehat{a},\,\widehat{b})$
is an estimation of the variance of $nh^{1/2}Q_{n}(\gamma;\,\widehat{a},\,\widehat{b}).$
Here and in the following, the vector $ \gamma = (\gamma_1,\cdots,\gamma_p)^\top$  is identified with $\sum_{k=1}^p \gamma_k \phi_k\in L^2[0,1].$
The value of $p$ is chosen by the statistician. The direction $\widehat \gamma_n$ is selected as
\[
\widehat{\gamma}_{n}=\arg\max_{\gamma\in B_{p}}\left[nh^{1/2}Q_{n}(\gamma,\widehat a,\widehat b)/\widehat{v}_{n}
(\gamma,\widehat a,\widehat b)-\alpha_{n}\mathbf{1}\left\{ \gamma\neq\gamma_{0}\right\} \right],
\]
where $B_{p}\subset\mathcal{S}^{p}=\left\{ \gamma\in\mathbb{R}^{p}:\left\Vert \gamma\right\Vert =1\right\} $
is a set of positive Lebesgue measure on $\mathcal{S}^{p}$ and $\gamma_0$ is a privileged direction chosen by the statistician and $\alpha_n$ is a penalty term.
Here we follow PSSa and we take $p=3$ and $B_{3}$ as a set of $1200$ points on $\mathcal{S}^3$, $\gamma_0=\left(1,1,1\right)/\sqrt{3}$ and  $\alpha_{n}=3$.

The results are presented on Figure
\ref{fig:Scalar-Y}
the null hypothesis (5000 replications) and several alternatives (2500 replications) defined by some positive values of $\delta.$
The PSSa statistic is computed with wild bootstrap critical
values. The rejection rate for the bootstrap version of our test appears to be better than that based on asymptotic critical values
for each considered alternative.
Moreover, the results obtained with $\|\cdot\|_{\mathcal{H}_2}$ are better than  those obtained  with $\|\cdot\|_{\mathcal{A}}.$
The  PSS1 outperforms our test in terms of power for the setups (\ref{eq:scalY}) and (\ref{eq:scalYfar})
with $k=2$. This could be explained by the nature of the PSS1 statistic which by construction is powerful against such alternatives. When considering the setup
(\ref{eq:scalYfar})
with $k=4$ the power is deteriorates drastically for all the tests. The fourth coordinate $\langle X_i,e_4\rangle$ being independent of the first three involved in the PSS1 statistic, the empirical power of that statistic is practically equal to the level for any sample size. The empirical power of our statistic improves with the sample size and so confirms the asymptotic results.
The plateau for the empirical rejection curves for our test could be explained by an inflated variance on the alternatives, but its level increases with the sample size.

\subsection{The case of functional response}

Three models with functional $Y$ are considered:
\begin{eqnarray}
Y_{i}\left(t\right) & = & \delta\times\beta\left(t\right)X_i\left(t\right)+\epsilon_{i}\left(t\right)\label{eq:funcYconc}\\
Y_{i}\left(t\right) & = & \delta\times H\left(B_{i}\left(t\right)\right)+\epsilon_{i}\left(t\right)\label{eq:funcYquad}\\
Y_{i}\left(t\right) & = & \delta\times\lambda^{-1/2}_k\left\langle B_i , e_k \right\rangle+\epsilon_{i}\left(t\right)\label{eq:funcYfar}
\end{eqnarray}
$1\leq i\leq n$, where $X_{i}$ and $\epsilon_{i}$
are independent Brownian bridges, $B_{i}$ is a Brownian motion,
$$
\beta\left(t\right)=\exp\left\{ -4 \left(t-0.3\right)^{2}\right\} ,\quad
t\in[0,1],
$$
$e_k(\cdot)$ and $\lambda_k$ are defined as in the case of scalar response for some fixed  $k\geq 1$,
and
$H\left(x\right)=x^{2}-1$, $x\in\mathbb{R}$. We consider $q=1$ and the $\widehat Z_i$, $\widehat W_i$ and $\widehat \varphi_{ij}$ are built like in the case of a scalar response.

We compare our test with the one considered by \cite{Patilea2012b} (hereafter PSSb). Their statistic, let us call it $T_n^{PSSb},$ which is a variant of $T_n^{PSSa}$ above defined  with a different $Q_{n}. $ That is, in the definition of
$Q_{n}$
the product $\widehat{U}_{i}\widehat{U}_{j}$ is replaced by the scalar product
$\langle \widehat{U}_{i},\,\widehat{U}_{j}\rangle $ and  $K\left(h^{-1}\left\langle X_{i}-X_{j},\,\gamma\right\rangle  \right)$
by
\[
K_{h}\left(h^{-1}[F_{\gamma,n}\left(\left\langle X_{i},\,\gamma\right\rangle  \right)-F_{\gamma,n}\left(\left\langle X_{j},\,\gamma\right\rangle \right)]\right)
\]
where $F_{\gamma,n}$ is the empirical c.d.f. of the sample
$\left\langle X_{1},\,\gamma\right\rangle ,\dots,\,\left\langle X_{n},\,\gamma\right\rangle ,$
$\gamma\in B_p\subset\mathbb{R}^p .$ Following PSS2, in this case we take $p=3,$ $B_3$
as a set of $1200$ points on $\mathcal{S}^3$, $\gamma_0=\left(1,1,1\right)/\sqrt{3}$
and $\alpha_{n}=2$. Moreover, since here we test for the effect, $\widehat U_i$ are nothing but the observations $Y_i.$

We also compare our test with the test  of \cite{Kokoszka2008} (hereafter KMSZ) based on the eigenvalues $\left(\widehat{\gamma}_{k}\right)_{k}$
and $(\widehat{\lambda}_{k})_{k}$ and eigenvectors $\left(\widehat{u}_{k}\right)_{k}$
and $\left(\widehat{v}_{k}\right)_{k}$, $1\leq k\leq n$ of the respective
empirical operators
$$
\Gamma_{n}x = \dfrac{1}{n}\sum_{i=1}^{n}\left\langle X_{i},\, x\right\rangle X_{i},\qquad
\Lambda_{n}x = \dfrac{1}{n}\sum_{i=1}^{n}\left\langle Y_{i},\, x\right\rangle Y_{i},
$$
and also
\[
\Delta_{n}x=\dfrac{1}{n}\sum_{i=1}^{n}\left\langle X_{i},\, x\right\rangle Y_{i},
\]
the test statistic being
\[
T_{n}\left(\tilde p,\tilde q\right)=n\sum_{k=1}^{\tilde p}\sum_{j=1}^{\tilde q}\dfrac{\left\langle \Delta_{n}\widehat{v}_{k},\,\widehat{u}_{j}\right\rangle ^{2}}{\widehat{\gamma}_{k}\widehat{\lambda}_{j}}.
\]
This statistic is asymptotically $\chi^{2}\left(\tilde p \tilde q\right)$ distributed when there is no
linear effect of $X$ on $Y$. We test the ``no effect'' model on
the three setups (\ref{eq:funcYconc}), (\ref{eq:funcYquad}) and (\ref{eq:funcYfar}) using
$\widehat{U}_{i}=Y_{i}-\overline{Y}_{n}$. For this we consider the cases $\tilde p=1,$ $\tilde q=6$ and $\tilde p=2,$ $\tilde q=6.$

Again, we investigate the accuracy of the asymptotic critical values and the bootstrap correction,
following the same steps as in the case of scalar $Y, $ this time for $1000$ replications under the null hypothesis. We present
the results in the right panel of Figure \ref{fig:Level}. The conclusions are similar to those of the scalar case, that is the asymptotic critical values are rather
inaccurate with $n=40$. The bootstrap correction is  quite effective, whatever the considered
bandwidth is.
The empirical power results for positive deviations $\delta$ for the three models considered
are presented in Figure \ref{fig:Functional-Y}. They are based on a number of  500 replications of the experiment.
The results obtained with $\|\cdot\|_{\mathcal{H}_2}$ are again preferable.
One can see that KMSZ and PSS2 perform very well for the concurrent alternative.
However, for a linear alternative with $k=4$, the bootstrap version of our test seems to be the best choice. The good performance of the PSS2 with samples of size $n=40$ could be explained by a correlation between $\langle B_i, \widehat \phi_1 \rangle, \cdots,\langle B_i, \widehat \phi_3 \rangle$ and $\langle B_i, \widehat \phi_4 \rangle$ which approximates $\langle B_i,  e_4 \rangle.$ This correlation vanishes when $n$ increases resulting in a loss of power for PSS2 test. In this experiment we also studied the effect of larger dimension $q$ with $n=40$ and the concurrent alternative, equation (\ref{eq:funcYconc}), and quadratic alternative, equation (\ref{eq:funcYquad}). The results presented in Figure \ref{fig:Functional-Y} reveals a drastic decrease of power. A possible explanation is that when the first components $\langle X_i,\widehat \phi_1\rangle$ carry enough information on the covariate, the price to pay in terms of power for smoothing in higher dimension could be too high, so that it may be preferable to consider  $q=1.$

\subsection{Real data application}

The approach proposed in this paper is applied to check the goodness-of-fit of several models for the Canadian weather dataset. This dataset is studied in \cite{Ramsay2005} and is included in the R package fda (http://www.r-project.org). The data consist of the daily mean temperature  and rain   registered in 35 weather stations in Canada. A curve is available for each station, describing the rainfall for each day of the year. This is the functional response.
The same type of curve with the temperature is used as functional predictor. Several regression models with functional covariate and functional response have been studied in \cite{Ramsay2005}, and illustrated with the Canadian weather dataset. The purpose here is to  assess the validity of each of the following three models
\begin{eqnarray}
Y_{ij}\left(t\right) & = & \mu\left(t\right)+\varepsilon_{ij}\left(t\right),\label{eq:NoEffect}\\
Y_{ij}\left(t\right) & = & \mu\left(t\right)+\alpha_{j}\left(t\right)+\varepsilon_{ij}\left(t\right),\label{eq:Anova}\\
Y_{ij}\left(t\right) & = & \mu\left(t\right)+\alpha_{j}\left(t\right)+\intop X_{ij}\left(s\right)\xi\left(s,t\right)ds+\varepsilon_{ij}\left(t\right),\label{eq:Ancova}
\end{eqnarray}
where $\sum_{j=1}^{J}\alpha_{j}\left(\cdot\right)\equiv0$
to ensure identification of models (\ref{eq:Anova}) and (\ref{eq:Ancova}).
The stations are classified in four climatic zones (Atlantic, Pacific, Continental, Arctic) and  $Y_{ij}(t)$ represents the logarithm of the rainfall at the station $i$ of the climate zone $j$ on day $t$, $X_{ij}(t)$ is the temperature at the same station on day $t$ of the year.
Since each observation
$Y_{ij}$ is observed for the same time design, we just use
\[
\widehat{U}_{ij}\left(\cdot\right)=Y_{ij}\left(\cdot\right)-\overline{Y}_{n}^{\left(i\right)}\left(\cdot\right)\mbox{ and }\widehat{U}_{ij}\left(\cdot\right)=Y_{ij}\left(\cdot\right)-\overline{Y}_{\cdot j}^{\left(i\right)}\left(\cdot\right)
\]
for models (\ref{eq:NoEffect}) and (\ref{eq:Anova}) respectively. Here we use the notation $$\overline{A}_{n}^{\left(i\right)}=\left(n-1\right)^{-1}\left(-A_{ij}+\sum_{j=1}^{J}\sum_{k=1}^{n_{j}}A_{kj}\right)$$
and $\overline{A}_{\cdot j}^{\left(i\right)}=\left(n_{j}-1\right)^{-1}\sum_{k\in\{1,\dots,n_{j}\}\backslash\{i\}}A_{kj}$
represent respectively the leave-one out overall mean and the class $j$
mean for the variable $A$ and the observation $i$.
For the model (\ref{eq:Ancova}), let us notice that
\[
\overline{Y}_{\cdot j}^{\left(i\right)}\left(t\right)=\mu\left(t\right)+\alpha_{j}\left(t\right)+\intop\overline{X}_{\cdot j}^{\left(i\right)}\left(s\right)\xi\left(s,t\right)ds+\overline{\varepsilon}_{\cdot j}^{\left(i\right)}\left(t\right)
\]
and then
\[
\tilde{Y}_{ij}\left(t\right)=\intop\tilde{X}_{ij}\xi\left(s,t\right)ds+\tilde{\varepsilon}_{ij}
\]
where $\tilde{A}_{ij}=A_{ij}-A_{\cdot j}^{\left(i\right)}$. Thus we
construct the functional principal components based on $\left\{ \tilde{X}_{ij}\left(\cdot\right),\, j\in\left\{ 1,\dots,J\right\} ,\, i\in\left\{ 1,\dots,n_{j}\right\} \right\} $
which leads to $\tilde{X}_{ij}\left(\cdot\right)\simeq\sum_{\kappa=1}^{K}\lambda_{\kappa}c_{ij\kappa}v_{\kappa}\left(\cdot\right)$
(where $\sum_{j=1}^{J}\sum_{i=1}^{n_{j}}c_{ij\kappa}^{2}=1$, $\Vert v_{\kappa}\left(\cdot\right)\Vert^{2}=1$
and $\lambda_{1}\geq\lambda_{2}\geq\dots\geq0$) and
\[
\widehat{U}_{ij}\left(t\right)=\tilde{Y}_{ij}\left(t\right)-\sum_{\kappa=1}^{K}c_{ij\kappa}\left(-c_{ij\kappa}\tilde{Y}_{ij}\left(t\right)+\sum_{l=1}^{J}\sum_{k=1}^{n_{l}}c_{kl\kappa}\tilde{Y}_{kl}\left(t\right)\right).
\]
All this leave-one-out feature is used to avoid overfitting and for
the choice of $K=13$, we used the one that minimizes $\sum_{j=1}^{J}\sum_{i=1}^{n_{j}}\Vert\widehat{U}_{ij}\left(\cdot\right)\Vert^{2}_{\mathcal{H}_2}$.
We also consider the effect of this choice considering also $K=12$
and $K=14$.

On one hand, we choose not to project the response variable before
the test process, because some of the link between $Y$ and $X$ could
be in the truncated part of $Y$. On the other hand, reducing the dimension
for $X$ is compulsory to solve the infinite dimension inverse problem.
We consider the smoothing dimensions $q=1$ and $q=3$, with $h=n^{-1/\left(q+4\right)}$
for the test. Only the norm $\|\cdot\|_{\mathcal{H}_2}$ was used for the functional covariates. Our test rejects all the models when using $q=1$. Meanwhile the model (\ref{eq:Ancova}) is not rejected with $q=3.$ This could be explained by a possible lack of power due to smoothing in higher dimension.

%

\begin{table}
\begin{centering}
\begin{tabular}{|c|c|c|}
\hline
Model & $q=1$ & $q=3$\tabularnewline
\hline
\hline
(\ref{eq:NoEffect}) & $0$ & $0$\tabularnewline
\hline
(\ref{eq:Anova}) & $0$ & $0.009$\tabularnewline
\hline
(\ref{eq:Ancova}), $K=12$ & $0.016$ & $0.403$\tabularnewline
\hline
(\ref{eq:Ancova}), $K=13$ & $0.023$ & $0.736$\tabularnewline
\hline
(\ref{eq:Ancova}), $K=14$ & $0.015$ & $0.723$\tabularnewline
\hline
\end{tabular}

\caption{Bootstrap $p-$values for modeling Canadian Weather data three different ways and
for different smoothing dimension $q\in\left\{ 1,\,2\right\} $ and
$999$ bootstrap samples.\label{tab:p-values-canadian}}
\end{centering}
\end{table}


%
\newpage

\small
\bibliographystyle{ecta}
\bibliography{ManuscritAbre}
\normalsize

\newpage

\setcounter{equation}{0}
\section{Technical results and proofs}

\begin{proof}[Proof of Lemma \ref{lemma_fund}]
The implication from left to right is obvious. For the reverse one,
let us consider $l^2\subset\mathbb{R}^\infty$ the space of real valued, square integrable sequences $x=(x_1,x_2,\cdots),$ endowed with the scalar product $\langle x , y \rangle_2 = \sum_{k=1}^\infty x_ky_k.$ Since any $w\in\mathcal{H}_2$ can be decomposed $ w = \sum_{k\geq 1}\langle w,\phi_k\rangle_{\mathcal{H}_2} \phi_k ,$ where $\{  \phi_1,\phi_2,\cdots \}$ is the orthonormal basis considered in $\mathcal{H}_2,$ we shall use the usual identification between $\mathcal{H}_2$ and $l^2$ given by the isomorphism
$w\in\mathcal{H}_2 \mapsto (\langle w,\phi_1\rangle_{\mathcal{H}_2},\langle w,\phi_2\rangle_{\mathcal{H}_2},\cdots) \in l^2.$
Denote $W_{12} = W_1 - W_2.$

Next, consider the linear operator $Q$ from $\mathcal{H}_2$ into $\mathcal{H}_2$ defined by
$$
Q \phi_k = a_k \phi_k , \qquad k\geq 1.
$$
The condition that the series  $\sum_{k\geq 1} a_k$ is convergent means that the trace of the operator $Q$ is finite. Now, since $\mathbb{E}[ \| W_{12} \|^2_{\mathcal{H}_2} ] < \infty,$ there exists a set of events $N$ such that $\mathbb{P}(N)=1$ and for any $\omega\in N,$ $W_{12}(\varpi)\in l^2$ and hence $Q(W_{12}(\varpi))\in l^2.$
By classical results in mathematical analysis in infinite-dimensional Hilbert spaces, see for instance Theorem 1.12 in \cite{DaPrato2006}, there exists a (unique) probability measure $\mu_Q$ on $\mathcal{H}_2$ endowed with the Borel $\sigma-$field such that for any $\varpi\in N,$
\begin{eqnarray*}
\exp(-\| W_{12} (\varpi)\|^2_{\mathcal{A}}/2) &=& \exp(-\langle  Q(W_{12} (\varpi)), \; W_{12} (\varpi)\rangle_ {\mathcal{H}_2}/2) \\
&=& \int_{\mathbb{R}^\infty} \exp\left( i \left \langle W_{12} (\varpi),x\right \rangle_2 \right)\mu_Q (x)\\
&=& \int_{l^2} \exp\left( i \left \langle W_{12}  (\varpi),x\right \rangle _2 \right)\mu_Q (x).
\end{eqnarray*}
The last equality expresses  the fact that the probability measure $\mu_Q $ concentrates on $l^2.$
Using this identity for each $\varpi\in N,$ the inverse Fourier transform for $h^{-q}K((Z_1 - Z_2)/h),$ the Fubini theorem and a change of variables we can write
\begin{eqnarray*}
I(h) &=& \int_{l ^2} \int_{\mathbb{R}^{q}}\mathbb{E}\left[ \left\langle
V_{1} e^{i\{t^{\top}Z_1 +  \left\langle x, W_{1}\right\rangle_{2}\}}, V_{2}
\; e^{-i\{t^{\top}Z_2 +  \left\langle x, W_{2}\right\rangle_{2}\}}
 \right\rangle_{\mathcal{H}_1} \right] \mathcal{F}[K](th)dt d\mu_Q(x)\\
&=&  \int_{l ^2} \int_{\mathbb{R}^{q}}\left\|\mathbb{E}\left[
V_{1} e^{i\{t^{\top}Z_1 +  \left\langle x, W_{1}\right\rangle_{2}\}}
 \right]  \right\|^2_{\mathcal{H}_1} \mathcal{F}[K](th)dtd\mu_Q(x),
\end{eqnarray*}
where $V_{j} = \mathbb{E}[U_j\mid Z_j,W_j]\omega(Z_j),$ $j=1,2.$  Deduce that
$$
 \mathbb{E}\left[\mathbb{E}[U\mid Z,W]\omega(Z)e^{i\{t^{\top}Z +  \left\langle x, W\right\rangle_{\mathcal{H}_2}\}}\right]=0,\qquad \forall t\in\mathbb{R}^q, \; x\in l^2.
$$
By the uniqueness of the Fourier Theorem in Hilbert spaces, see for instance Proposition 1.7 of \cite{DaPrato2006}, it follows that $\mathbb{E}[U\mid Z,W]=0.$
Now, the proof is complete.
\end{proof}

\qquad

\begin{lem_a}\label{vW_k} Suppose that Assumptions \ref{D}-(a) and \ref{K} are met.

(a)
\begin{equation*}
\sup_{t\in \mathbb{R}^q}  \frac{1}{n h^q} \sum_{i=1}^n K^k((t-Z_i)/h) = 
O_{\mathbb{P}}\left(  \sqrt{\frac{\ln n }{n h^q } }   \right) + o(h^{-q/2}), 
\end{equation*}
for $k=1$ or $k=2.$

(b) Let $0<\gamma_1,\gamma_2$  i.i.d. random variables such that $\mathbb{E}[\mathbb{E}^4(\gamma_1 \mid Z_1)]<\infty.$ Then
$
\mathbb{E}[\gamma_1 \gamma_2 h^{-q} K^2((Z_1-Z_2)/h)]
$ converges to a positive constant as $h\rightarrow 0.$
\end{lem_a}

\begin{proof}[Proof of Lemma \ref{vW_k}]
(a) We only consider the case $k=1$, the case $k=2$ is very similar.
By Theorem 2.1 of \cite{Vaart2011},
\begin{equation}\label{rate1}
\sup_{t\in \mathbb{R}^q} \left| \frac{1}{n} \sum_{i=1}^n \left\{K((t-Z_i)/h) -  \mathbb{E}[K((t-Z)/h) ] \right\} \right| = O_{\mathbb{P}} \left( \sqrt{\frac{h^{q} \ln n}{n}}\right)
\end{equation}
Indeed, let $\mathcal{G}$ be a class of functions of
the observations with envelope function $G,$ that here will is supposed bounded, and let
$$ J(\delta,\mathcal{G}, L^2 )=\sup_Q \int_0^\delta \sqrt{1+\ln N
  (\varepsilon \|G\|_{Q,2},\mathcal{G}, L^2(Q) ) } \; d\varepsilon
,\qquad 0<\delta\leq 1,
$$ denote the uniform entropy integral, where the supremum is taken
over all finitely discrete probability distributions $Q$ on the space
of the observations, and $\| G \|_{Q,2}$ denotes the norm of $G$ in
$L^2(Q)$. Let $Z_1,\cdots,Z_n$ be a sample of independent observations and let
\begin{equation*}
\mathbb{G}_n g=\frac{1}{\sqrt{n}}\sum_{i=1}^n \left\{g (Z_i)-\mathbb{E}[g(Z)]\right\} , \qquad \gamma \in\mathcal{G}
\end{equation*}
be the empirical process indexed by $\mathcal{G}$. If the covering number $N (\varepsilon ,\mathcal{G}, L^2(Q) ) $ is of polynomial order in
$1/\varepsilon,$ there exists a constant $c>0$ such that
$J(\delta,\mathcal{G}, L^2 )\leq c \delta \sqrt{\ln(1/\delta)}$ for
$0<\delta<1/2.$ Now if $\mathbb{E}g^2 < \delta^2 \mathbb{E}G^2$
for every $\gamma$ and some $0<\delta <1$,
Theorem 2.1 of \cite{Vaart2011} implies
\begin{equation}\label{vwww0}
 \sup_{\mathcal{G}}|\mathbb{G}_n g| = J(\delta,\mathcal{G}, L^2 )\left( 1 + \frac{ J(\delta,\mathcal{G}, L^2 )}{\delta^2 \sqrt{n} \|G\|_2 }\right) \|G\|_2 O_p(1),
\end{equation}
where $\|G\|_{2}^2 = \mathbb{E}G^2$ and the  $ O_p(1)$ term is independent of $n.$ Note that the family
$\mathcal{G}$ could change with $n$, as soon as the envelope is the
same for all $n$. We can thus apply this result to the family of functions
$\mathcal{G} = \{ K ((t - \; \cdot)/h) : t\in\mathbb{R}^q\}$ for a
sequence $h$ that converges to zero, the envelope
$G(\cdot)\equiv K(0)$, and $\delta = h^{q/2}.$ Its
entropy number is of polynomial order in $1/\varepsilon$,
independently of $n$, as $K(\cdot)$ is of bounded variation. Thus the rate in (\ref{rate1}) follows.

On the other hand, if $ |\mathcal{F}[f_Z](u)|^{2-a}$ is integrable for some $a\in(0,1],$ by the properties of the Fourier and inverse Fourier transforms, Fubini theorem and the Cauchy-Schwarz inequality, for any $t\in\mathbb{R},$
\begin{eqnarray}\label{ffrjj}
\mathbb{E}[h^{-q} K \left((t- Z)/h \right)] &=& \left|(2\pi)^{-q/2}\mathbb{E} \int_{\mathbb{R^q}} \exp\{i u^\top t \}\exp\{ -i u^\top Z\}\mathcal{F}[K] (hu) du\right|\notag\\
&=& \left| \int_{\mathbb{R^q}} \exp\{i u^\top t \}\mathcal{F}[f_Z](u)\mathcal{F}[K] (hu) du\right| \notag \\
&\leq & \left[ \int_{\mathbb{R^q}} |\mathcal{F}[f_Z](u)|^{2-a} du\right]^{\frac{1}{2-a}}
\left[ \int_{\mathbb{R}} |\mathcal{F}[K](hu)|^{(2-a)/(1-a)} du\right]^{\frac{1-a}{2-a}}\notag\\
&\leq& C
\left[ h^{-q}\int_{\mathbb{R}^q} |\mathcal{F}[K](u)| du\right]^{\frac{1-a}{2-a}}\notag\\
&=&
Ch^{-q(1-a)/(2-a)}\notag\\
&=& o(h^{-q/2}),
\end{eqnarray}
for some constant $C$  independent of $t.$ Alternatively, if the density $f_Z$ is bounded, by a change of variables we can write
\begin{equation}\label{ffrjj_2}
\mathbb{E}[h^{-q} K \left((t- Z)/h \right)] = \int_{\mathbb{R}^q} K(u) f_Z(t-uh)du \leq C^\prime ,
\end{equation}
for some constant $C^\prime$ independent of $t.$ From equations (\ref{spectral}), (\ref{rate1}), (\ref{ffrjj}) and (\ref{ffrjj_2})
\begin{equation}\label{bdd1}
\sup_{t\in \mathbb{R}^q}  \frac{1}{nh^q} \sum_{i=1}^n K^k((t-Z_i)/h) = 
O_{\mathbb{P}}\left(  \sqrt{\frac{\ln n }{n h^q } }   \right) + o(h^{-q/2}),
\end{equation}
for $k=1$ or $k=2.$

(b) Let $e(z)=\mathbb{E}[\gamma_1 \mid Z_1 = z].$ If $f_Z$ satisfies the condition $\int_{\mathbb{R}^q} |\mathcal{F}[f_Z]|^{2-a}(t) dt <\infty$ for some $a\in (0,1]$, then $\int_{\mathbb{R}^q} f_Z^2 <\infty$ and hence by Cauchy-Schwarz inequality $$\int_{\mathbb{R}^q} f_Z^2 e^2 \leq \left( \int_{\mathbb{R}^q} f_Z^2  \right)^{1/2} \left( \int_{\mathbb{R}^q} f_Z e^4  \right)^{1/2}<\infty.$$  Using Fubini Theorem, the inverse Fourier Transform formula and Parseval identity, we can write
\begin{eqnarray*}
\mathbb{E}[\gamma_1 \gamma_2 h^{-q} K \left((Z_1- Z_2)/h \right)] &=& \mathbb{E}[h^{-q} e(Z_1)e(Z_2) K \left((Z_1- Z_2)/h \right)]\\
&=& (2\pi)^{-\frac{q}{2}}\, \mathbb{E} \int_{\mathbb{R^q}}  e(Z_1)\exp\{i u^\top Z_1 \} \\
&&\qquad \qquad \qquad \times e(Z_2) \exp\{ -i u^\top Z_2\}\mathcal{F}[K] (hu) du\notag\\
 &=& \int_{\mathbb{R^q}} \left|\mathcal{F}[f_Z e](u)\right|^2 |\mathcal{F}[K]| (hu) du \notag \\
  &\rightarrow& \int_{\mathbb{R^q}} \left|\mathcal{F}[f_Ze](u)\right|^2  du \notag\\
&=&\int_{\mathbb{R^q}} f_Z^2(u)e^2(u) du ,
\end{eqnarray*}
where for the limit we use the Dominated Convergence Theorem. If $f_Z$ is bounded, we can use a change of variables like for equation (\ref{ffrjj_2}) and again the Dominated Convergence Theorem to obtain the same strictly positive and finite limit.
\end{proof}

\qquad

\begin{proof}[Proof of Theorem \ref{cons_Tn}]
The proof is based on
the Central Limit Theorem 5.1 of \cite{Jong1987}.
Let
$$
\Omega_{ij} = \frac{1}{n(n-1)h^{q}}\left\langle U_{i}\omega(Z_i), \;U_{j}\omega (Z_j) \right\rangle_{\mathcal{H}_1} K_{ij}(h)\varphi_{ij}
,\quad 1\leq i\neq j\leq n,
$$
and $\Omega_{ii} =0,$ $1\leq i\leq n.$
Let  $\Omega(n)=\sum_{i\neq j} \Omega_{ij}$ and $\sigma(n)^2 = 2  \sum_{i\neq j} \sigma^2_{ij}$ where
$$
\sigma_{ij}^2 =  \mathbb{E} [ \Omega_{ij} ^2\mid (Z_i,W_i),(Z_j,W_j)] = \frac{V_{ij}^2K^2_{ij}(h)\varphi_{ij}^2}{n^2(n-1)^2h^{2q}}
$$
and
$$
V_{ij}^2 = \mathbb{E} [ \left\langle U_{i}\omega(Z_i), \;U_{j}\omega (Z_j) \right\rangle_{\mathcal{H}_1}^2 \mid (Z_i,W_i),(Z_j,W_j)]
$$
Consider the following conditions:
\begin{enumerate}
\item there exists a sequence of real numbers $k_n$ such that
\begin{equation}\label{clt1}
k^2_n \sigma(n)^{-2} \max_{1\leq i\leq n} \sum_{1\leq j\leq n}  \sigma^2_{ij} = o_{\mathbb{P}}(1)
\end{equation}
and
\begin{equation}\label{clt2}
\max_{1\leq i\neq j \leq n}   \sigma^{-2}_{ij} \mathbb{E}\left[ \Omega_{ij}^2 \mathbf{1}_{\{|\Omega_{ij}|> k_n \sigma_{ij}\}}\mid (Z_i,W_i),(Z_j,W_j)\right] = o_{\mathbb{P}}(1);
\end{equation}
\item
\begin{equation}\label{clt3}
\sigma(n)^{-2} \max_{1\leq i\leq n}  \mu^2_{i} = o_{\mathbb{P}}(1),
\end{equation}
where $\mu_1,\cdots,\mu_n$ are the eigenvalues of the matrix $(\sigma_{ij})$.
\end{enumerate}
If these conditions hold true, using the characterization of the convergence in probability based on almost surely convergence subsequences, Theorem 5.1 of \cite{Jong1987} applied conditionally on the covariates implies that  for any $t\in\mathbb{R},$
$$
\mathbb{P}\left(\sigma(n)^{-1}\Omega(n) \leq t \mid (Z_1,W_1),\cdots,(Z_n,W_n)  \right) - \Phi(t) = o_{\mathbb{P}}(1).
$$
By the dominated convergence theorem, $\sigma(n)^{-1}\Omega(n)$ converges to in law to a standard normal distribution. Hence, it remains to check conditions (\ref{clt1}) to (\ref{clt3}).

First, let us bound from below $\sigma(n).$ By Assumption \ref{D}-(c)-(i),
$
V_{ij}^2 \geq \underline{\sigma}^{2}
$
almost surely, so that
$$
\sigma(n)^2 \geq  \frac{\underline{\sigma}^{2}}{n^2(n-1)^2h^{2q}}\sum_{i\neq j} K^2_{ij}(h)\varphi^2_{ij}\geq  \frac{\underline{\sigma}^{2}}{n^2(n-1)^2h^{2q}}
\sum_{i\neq j} K^2_{ij}(h)\lambda^2_{i}\lambda^2_{j} = \underline{\sigma}^{2}\tau(n)^2,$$
where
$\lambda_i = \exp(-2\|W_i\|^2_{\mathcal{A}}).$
By  standard calculations, the variance
$n(n-1)h^{q}\tau(n)^2$ tends to zero. By Lemma \ref{vW_k}-(b) the expectation of
$n(n-1)h^{q}\tau(n)^2$  tends to a positive constant. Deduce that
\begin{equation}\label{rate01}
\sigma(n)^{-2} =O_{\mathbb{P}}(n^2h^{q}).
\end{equation}
Next, note that by H\"{o}lder inequality and Assumption \ref{D}-(c)-(ii),
$$
V_{ij}^2 \leq  \mathbb{E} [ \left\| U_{i}\omega(Z_i) \right\|_{\mathcal{H}_1}^2 \mid (Z_i,W_i),(Z_j,W_j)]\mathbb{E} [ \left\| U_{j}\omega(Z_j) \right\|_{\mathcal{H}_1}^2 \mid (Z_i,W_i),(Z_j,W_j)]\\
\leq  C^{4/\nu},
$$
almost surely. Deduce from this and Lemma \ref{vW_k}-(a) that
\begin{equation}\label{rate01b}
\max_{1\leq i \leq n} \sum_{1\leq j\leq n }\sigma_{ij}^2 =o_{\mathbb{P}}(n^{-2}h^{-q}).
\end{equation}
Then condition (\ref{clt1}) follows from (\ref{rate01}) and (\ref{rate01b})
for some suitable sequence $k_n\rightarrow\infty.$

Next, let us note that
$$
\frac{\Omega^2_{ij}}{\sigma_{ij}^2} \leq \underline{\sigma}^{-2}\left\langle U_{i}\omega(Z_i), \;U_{j}\omega (Z_j) \right\rangle_{\mathcal{H}_1}^2,
$$
so that for any $i$ and $j,$ by H\"{o}lder inequality, Markov inequality and Assumption \ref{D}-(c)-(ii),
\begin{multline*}
\sigma^{-2}_{ij} \mathbb{E}\left[ \Omega_{ij}^2 \mathbf{1}_{\{|\Omega_{ij}|> k_n \sigma_{ij}\}}\mid (Z_i,W_i),(Z_j,W_j)\right] \leq
\underline{\sigma}^{-2}
\mathbb{E}\left[ \left\langle U_{i}\omega(Z_i), \;U_{j}\omega (Z_j) \right\rangle_{\mathcal{H}_1}^2 \right.\\
\qquad \qquad \qquad \qquad \qquad \qquad \qquad \left. \times \mathbf{1}_{\{|\left\langle U_{i}\omega(Z_i), \;U_{j}\omega (Z_j) \right\rangle_{\mathcal{H}_1}|> \underline{\sigma} \;k_n \}}\mid (Z_i,W_i),(Z_j,W_j)\right]\\
\leq \underline{\sigma}^{-2} \mathbb{E}^{2/\nu} [ \left\| U_{i}\omega(Z_i) \right\|_{\mathcal{H}_1}^\nu
\left\| U_{j}\omega(Z_j) \right\|_{\mathcal{H}_1}^\nu
\mid (Z_i,W_i),(Z_j,W_j)]\\
\qquad \qquad \qquad\qquad \times \mathbb{P}^{(\nu-2)/\nu} [ |\left\langle U_{i}\omega(Z_i), \;U_{j}\omega (Z_j) \right\rangle_{\mathcal{H}_1}|> \underline{\sigma} \;k_n  \mid (Z_i,W_i),(Z_j,W_j)]\\
\leq\underline{\sigma}^{-2} C^{4/\nu}[C^{2/\nu}(\underline{\sigma} \;k_n)^{-1}]^{(\nu-2)/\nu},
\end{multline*}
almost surely. Thus condition (\ref{clt2}) holds true for any $k_n\rightarrow \infty.$

To check condition (\ref{clt3}), let $\mathcal{K} $ be the matrix with elements
\begin{equation}\label{matrix_k}
\mathcal{K}_{ij}=  V_{ij}\varphi_{ij}K\left( (Z_i-Z_j)/h \right)/[n(n-1)h^q], \quad i\neq j, \quad \text{and} \quad
\mathcal{K}_{ii}= 0,
\end{equation}
and $\|| \mathcal{K} \||_2$  is the spectral norm of $\mathcal{K}.$
By definition, $\|| \mathcal{K} \||_2 =\sup_{u\in\mathbb{R}^n , u\neq 0}{\Vert
\mathcal{K} u\Vert }/{\Vert u\Vert }$ and $|u^\prime \mathcal{K} w|\leq \|| \mathcal{K} \||_2 \|u\| \|w\|$ for any $u,w\in\mathbb{R}^n.$
By Cauchy-Schwarz inequality, for any $u\in
\mathbb{R}^{n}$,
\begin{eqnarray}
\left\Vert \mathcal{K} u\right\Vert ^{2} &=& \sum_{i=1}^{n}\left(
\sum_{j=1,j\neq i}^{n}V_{ij}\frac{K\left( (Z_i-Z_j)/nh \right) }{h^q\,n(n-1)}u_{j}\right) ^{2}\notag \\
&\leq &C^{4/\nu} \sum_{i=1}^{n}\left( \sum_{j=1,j\neq i}^{n}\frac{K\left(
(Z_i-Z_j)/nh\right) }{h^q\,n(n-1)}\right)
\sum_{j=1,j\neq i}^{n}\frac{K\left( (Z_i-Z_j)/nh\right) }{h^q\,n(n-1)}\,u_{j}^{2} \notag \\
&\leq &C^{4/\nu} \left\Vert u\right\Vert ^{2}n^{-2} \left[ \max_{1\leq i\leq n}\left(
\sum_{j=1,j\neq i}^{n}\frac{K\left( (Z_i-Z_j)/nh\right) }{h^q\,(n-1)}\right) \right] ^{2},\label{spectral}
\end{eqnarray}
for some constant $c>0$. By Lemma \ref{vW_k}-(a) deduce that
$$
 \max_{1\leq i\leq n}  \mu^2_{i} = \frac{1}{n^2} \left[ O_{\mathbb{P}}\left(  \frac{\ln n }{n h^q }    \right) + o(h^{-q}) \right]= o_{\mathbb{P}}\left(  \frac{1 }{n^2 h^q }    \right) .
$$
Condition (\ref{clt3}) follows from this and the rate (\ref{rate01}). Now the proof is complete. \end{proof}

\qquad

\begin{proof}[Proof of Theorem \ref{consist}]
Let us simplify the notation and denote $\omega_i = \omega(Z_i)$ and $\delta_i = \delta(Z_i,W_i).$ Next let us decompose
\begin{eqnarray*}
I_{n}(h) &=& \frac{1}{n(n-1)h^{q}}\sum\limits _{1\leq i\neq j\leq n}\left\langle U_{i}^0 \omega_{i}, \;U_{j}^0 \omega_{j} \right\rangle_{\mathcal{H}_1} K_{ij}(h)\;
\varphi_{ij}\\
&+&\frac{r_n}{n(n-1)h^{q}}\sum\limits _{1\leq i\neq j\leq n}\left\langle U_{i}^0 \omega_{i}, \;\delta_{j} \omega_{j} \right\rangle_{\mathcal{H}_1} K_{ij}(h)\;
\varphi_{ij}\\
&+& \frac{r_n^2}{n(n-1)h^{q}}\sum\limits _{1\leq i\neq j\leq n}\left\langle \delta_{i} \omega_{i}, \;\delta_{j} \omega_{j} \right\rangle_{\mathcal{H}_1} K_{ij}(h)\varphi_{ij}\\
&=& I_{0n}+ 2I_{1n}+I_{2n}.
\end{eqnarray*}
The rate of $I_{0n}$ is given by Theorem \ref{cons_Tn}, so that it remains to investigate the rates of $I_{1n}$ and $I_{2n}$ and to bound in probability $v^2_{n}(h).$ By standard calculations,
$$
\mathbb{E}(I_{0n}) = \mathbb{E}(I_{1n}) =0, \quad Var(  I_{0n})=O_{\mathbb{P}}(n^{-2}h^{-q}) \quad \text{and} \quad
 Var(  I_{1n})=O_{\mathbb{P}}(r_n^2 n^{-1}h^{-q}).
$$
Moreover,
$$
\mathbb{E}(I_{2n}) = r_n^2 \; \mathbb{E}[\left\langle \delta_{1} \omega_{1}, \;\delta_{2} \omega_{2} \right\rangle_{\mathcal{H}_1}h^{-q} K_{12}(h)\varphi_{12}] \quad \text{and} \quad Var(  I_{1n})=O_{\mathbb{P}}(r_n^4 n^{-1}h^{-q}).
$$
By dominated convergence we have
\begin{eqnarray*}
\mathbb{E}[\left\langle \delta_{1} \omega_{1}, \;\delta_{2} \omega_{2} \right\rangle_{\mathcal{H}_1}h^{-q} K_{12}(h)\varphi_{12}]&=& \mathbb{E}[\mathbb{E}[\left\langle \delta_{1} \omega_{1}, \;\delta_{2} \omega_{2} \right\rangle_{\mathcal{H}_1}\varphi_{12}\mid Z_1,Z_2]h^{-q} K_{12}(h)]\\
&=&(2\pi)^{-\frac{q}{2}}\, \mathbb{E} \left[\int_{\mathbb{R^q}} \mathbb{E}[\left\langle \delta_{1} \omega_{1}, \;\delta_{2} \omega_{2} \right\rangle_{\mathcal{H}_1}\varphi_{12}\mid Z_1,Z_2] \right.\\
&&\qquad  \left.\frac{}{}\times \exp\{i u^\top Z_1 \} \exp\{ -i u^\top Z_2\} \mathcal{F}[K] (hu) du\right]\notag\\
&\rightarrow& (2\pi)^{-\frac{q}{2}}\, \int_{\mathbb{R^q}} \mathbb{E}\left[\left\langle \delta_{1} \omega_{1}, \;\delta_{2} \omega_{2} \right\rangle_{\mathcal{H}_1}\varphi_{12}\right. \\
&&\qquad  \qquad\qquad \quad\left.\times \exp\{i u^\top Z_1 \} \exp\{ -i u^\top Z_2\}\right] du\notag
\end{eqnarray*}
By arguments as used in the proof of Lemma \ref{lemma_fund} the expectation of $I_{2n}$ could be shown to be strictly positive. Since $r_{n}^{2}nh^{q/2}\rightarrow\infty,$ the result follows.
\end{proof}

\qquad

\begin{lem}\label{diff_l2b}
Suppose the assumptions of Theorem \ref{test_estim}  hold true.
Then, for $l=1$ and $l=2$,
\begin{equation}\label{sn_spec_2}
\max_{1\leq i\leq  n } \frac{1}{nh}\sum_{j=1}^n \left| \; [\widehat K_{ij} (h) \widehat \varphi_{ij}]^l   - [ K_{ij} (h)  \varphi_{ij}]^l \right|  = o_{\mathbb{P}}(1)
\end{equation}
and
\begin{equation}\label{sn_spec5_2}
\frac{1}{n(n-1)h}\sum_{1\leq i\neq j \leq n} \left\{ [\widehat K_{ij} (h) \widehat \varphi_{ij}]^2   - [  K_{ij} (h)  \varphi_{ij}]^2\right\} = o_{\mathbb{P}}(1).
\end{equation}
\end{lem}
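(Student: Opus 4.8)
The plan is to reduce everything to the case $l=1$ of (\ref{sn_spec_2}) and then to split the increment of the product $K_{ij}(h)\varphi_{ij}$ into a kernel part and a weight part. For the reductions, note that $0<\varphi_{ij},\widehat\varphi_{ij}\le 1$ and $K$ is bounded, so writing $[\widehat K_{ij}(h)\widehat\varphi_{ij}]^2-[K_{ij}(h)\varphi_{ij}]^2=\{\widehat K_{ij}(h)\widehat\varphi_{ij}-K_{ij}(h)\varphi_{ij}\}\{\widehat K_{ij}(h)\widehat\varphi_{ij}+K_{ij}(h)\varphi_{ij}\}$ and bounding the second factor by $2\|K\|_\infty$ shows that the $l=2$ case of (\ref{sn_spec_2}) follows from the $l=1$ case. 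Likewise, bounding the signed sum in (\ref{sn_spec5_2}) by the sum of absolute values and then by $\tfrac{n}{n-1}$ times the maximum over $i$, equation (\ref{sn_spec5_2}) is a consequence of the $l=2$ case of (\ref{sn_spec_2}). So it remains to prove $\max_i \frac{1}{nh}\sum_j |\widehat K_{ij}(h)\widehat\varphi_{ij}-K_{ij}(h)\varphi_{ij}|=o_{\mathbb{P}}(1)$, for which I use
$$|\widehat K_{ij}(h)\widehat\varphi_{ij}-K_{ij}(h)\varphi_{ij}|\le |\widehat K_{ij}(h)-K_{ij}(h)|\,\widehat\varphi_{ij}+K_{ij}(h)\,|\widehat\varphi_{ij}-\varphi_{ij}|$$
and treat the two terms separately.

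The weight part is the easy one. Because $d\mapsto e^{-d^2/2}$ is Lipschitz on $[0,\infty)$ and $\|\cdot\|_{\mathcal{A}}\le (\sup_k a_k)^{1/2}\|\cdot\|_{\mathcal{H}_2}$ (the $a_k$ being summable, hence bounded), the reverse triangle inequality and (\ref{approx_cond}) give $|\widehat\varphi_{ij}-\varphi_{ij}|\le C(\Gamma_i+\Gamma_j)\Delta_n$ for a constant $C$. A union bound together with the exponential moment in (\ref{exp_cond}) yields $\max_i\Gamma_i=O_{\mathbb{P}}(\ln n)$, while $\frac1n\sum_j\Gamma_j=O_{\mathbb{P}}(1)$. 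Combining these with the bounds of Lemma \ref{vW_k}-(a) for $\sup_i\frac{1}{nh}\sum_j K_{ij}(h)$ and its $\Gamma_j$-weighted analogue, the weight part is $O_{\mathbb{P}}(\Delta_n\ln n)$ up to the kernel-sum factor, and since $\Delta_n=O_{\mathbb{P}}(n^{-1/2})$ this is $o_{\mathbb{P}}(1)$ under each of the three bandwidth regimes.

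The kernel part carries the real difficulty, and it is here that the three regimes appear. Writing $u_{ij}=(Z_i-Z_j)/h$ and using (\ref{approx_cond}), the perturbation of the kernel argument satisfies $|\widehat u_{ij}-u_{ij}|\le\eta_{ij}:=(\Gamma_i+\Gamma_j)\Delta_n/h$. Since $K$ has bounded variation, its increment is dominated by the total-variation measure $|dK|$ of the intervening interval, $|\widehat K_{ij}(h)-K_{ij}(h)|\le\int\mathbf{1}\{|t-u_{ij}|\le\eta_{ij}\}\,|dK|(t)$, so that, bounding $\eta_{ij}\le\bar\eta_n:=2\Delta_n\max_i\Gamma_i/h$,
$$\frac{1}{nh}\sum_j|\widehat K_{ij}(h)-K_{ij}(h)|\le\int\Big[\frac{1}{nh}\sum_j\mathbf{1}\{(Z_i-Z_j)/h\in[t-\bar\eta_n,t+\bar\eta_n]\}\Big]\,|dK|(t).$$
I would then split the inner local count into its conditional mean plus an empirical fluctuation. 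The fluctuation is controlled uniformly over $i$ and $t$ by applying the maximal inequality of Theorem 2.1 of \cite{Vaart2011} to the VC class of indicators of intervals, exactly as in the proof of Lemma \ref{vW_k}-(a); the polynomial covering number and the variance proportional to the window width give the required rate. The conditional mean is $h^{-1}\int_{[\,\cdot\,]}f_Z$ over a window of width $2h\bar\eta_n$, and it is precisely this term that is governed by the three assumptions on $f_Z$: the condition $\int|\mathcal{F}[f_Z]|^{2-a}<\infty$ via a Parseval/Fourier estimate paralleling (\ref{ffrjj}) in regime~1, the boundedness of $f_Z$ in regime~2, and the uniform continuity of $f_Z$ in regime~3. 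In each regime the stated bandwidth condition is exactly what is needed to drive both the mean and the fluctuation to $o_{\mathbb{P}}(1)$.

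The main obstacle is the kernel part, and within it regime~3. Because $K$ is only of bounded variation and not Lipschitz, increments cannot be linearized and the total-variation bound above is the only available substitute; it couples the random perturbation scale to a localized empirical count that must be made uniform in the center $Z_i$. In regimes~1 and~2 one has $\bar\eta_n\to0$, so the perturbation is uniformly $o(h)$ and the crude window bound suffices. Under regime~3, however, $nh/\ln n\to\infty$ does not force $\bar\eta_n\to0$, so the perturbation need not be $o(h)$; the crude window must be replaced by a finer, $\eta_{ij}$-localized analysis of the expected local count in which the uniform continuity of $f_Z$ is used in an essential way. Keeping this localization while retaining uniformity over $i$ is the step I expect to require the most care.
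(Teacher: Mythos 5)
Your reductions (from $l=2$ to $l=1$, and from (\ref{sn_spec5_2}) to the $l=2$ case of (\ref{sn_spec_2})) are correct, your treatment of the weight part matches the paper's in substance, and for regimes 1 and 2 your total-variation window bound is essentially the paper's argument in different notation: the paper writes $K^l=K_1-K_2$ with $K_1,K_2$ nondecreasing, sandwiches $K_{1h}(\widehat Z_i-\widehat Z_j)$ between $K_{1h}(Z_i-Z_j\pm 2b_n)$ on the event $\mathcal{E}_{1n}=\{\max_i(\|\widehat Z_i-Z_i\|+\|\widehat W_i-W_i\|_{\mathcal{H}_2})\le b_n\}$, controls the centered sums by a conditional Bernstein inequality rather than your entropy bound, and in regime 1 dispenses with all of this via the crude Lipschitz bound $Ch^{-2}\Delta_n\max_i\Gamma_i=O_{\mathbb{P}}(n^{-1/2}h^{-2}\ln n)$.

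The genuine gap is regime 3, which you have correctly located but not closed. Your inequality $|\widehat K_{ij}(h)-K_{ij}(h)|\le\int\mathbf{1}\{|t-u_{ij}|\le\bar\eta_n\}\,|dK|(t)$ is an absolute-value bound; after taking conditional means it yields a term of order $b_n/h$ (with $b_n=\Delta_n\max_i\Gamma_i$ the perturbation on the $Z$-scale), and under $nh/\ln n\to\infty$ alone one can have $b_n/h\to\infty$, so no appeal to uniform continuity of $f_Z$ can rescue it: the cancellation has already been thrown away, and indeed a two-sided perturbation of size $b_n\gg h$ makes the sum of absolute kernel increments comparable to the kernel sum itself. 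The paper's missing idea is to keep the sign structure: by monotonicity of $K_1$ the sandwich reduces the problem to the \emph{difference of two conditional expectations},
\begin{equation*}
\mathbb{E}\bigl[h^{-1}K_{1h}(Z_i-Z_j+2b_n)\varphi_{ij}\mid Z_i,W_i\bigr]-\mathbb{E}\bigl[h^{-1}K_{1h}(Z_i-Z_j)\varphi_{ij}\mid Z_i,W_i\bigr]
=\int K_1(u)\bigl[f_Z(Z_i+2b_n-uh)-f_Z(Z_i-uh)\bigr]du ,
\end{equation*}
which after the change of variables is bounded by $\sup_t|f_Z(t+2b_n)-f_Z(t)|$ times a norm of $K_1$ --- the modulus of continuity of $f_Z$ at scale $b_n\to 0$, with no $h^{-1}$ inflation. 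That signed, telescoping step is what your $|dK|$-measure bound cannot reproduce. (Be aware that even the paper's version of this step is delicate, since the monotone pieces of a compactly supported kernel are not integrable, so the factor $\int K_1(u)\,du$ in the paper's final display requires additional care; but in any case your absolute-value route provably cannot reach the conclusion in regime 3.)
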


\begin{proof}[Proof of Lemma \ref{diff_l2b}]
Assume that $nh^{4}/\ln^{2} n \rightarrow \infty$. By the Lipschitz property of the kernel and of the $\varphi(\cdot)$ function,  the bound (\ref{approx_cond}) and conditions (\ref{exp_cond}), for $l=1$ and $l=2,$
$$
\max_{i,j}h^{-1}\left|
[\widehat K_{ij} (h) \widehat \varphi_{ij}]^l   - [ K_{ij} (h)  \varphi_{ij}]^l
\right|
 \leq   C h^{-2} \Delta_n   \max_{1\leq i \leq n} |\Gamma_i|= O_{\mathbb{P}}(n^{-1/2} h^{-2}\ln n )= o_{\mathbb{P}}(1).
$$
The rates (\ref{sn_spec_2}) and (\ref{sn_spec5_2}) follow.

If conditions at point (2) or point (3) are met, the arguments are of different nature. First, note that the conditions of point (3) involve that $f_Z$ is bounded. Next, since the kernel $K$ is of bounded univariate kernels, let $K_1$ and $K_2$ non decreasing bounded functions such that $K^l = K_1 - K_2$ and denote $K_{1h}=K_1(\cdot/h)$. Clearly, it is sufficient to prove (\ref{sn_spec_2})  for $K_1$, similar arguments apply for $K_2$ and hence we get the results for $K^l$. For simpler writings let us assume that $K^l$ is differentiable and let $K_1(x) = \int^x_{-\infty} |K^{l\;\prime} (t) | dt,$ $x\in\mathbb{R}.$ The general case of a bounded variation $K^l$ can be handled with obvious modifications. Let $b_n\downarrow 0$ such that $b_n \sqrt{n}/\ln n \rightarrow\infty$
and define the event
\begin{equation}\label{set_en}
\mathcal{E}_{1n} = \{\max_{1\leq i\leq n} [\|\widehat Z_i - Z_i \| + \|\widehat W_i - W_i \|_{\mathcal{H}_2}|] \leq b_n\},
\end{equation}
so that $
\mathbb{P}(\mathcal{E}^c_{1n} )\rightarrow 0.
$
Since $|\exp(-t^2/2) - \exp(-s^2/2)|\leq |t-s|,$ on the set $\mathcal{E}_{1n}$, $\forall i,j$
\begin{eqnarray*}
&&\hspace{-2.5cm}-  \;b_n K_{1h}( Z_{i} -  Z_{j} + 2b_n )
-[  K_{1h}( Z_{i} -  Z_{j} - 2b_n ) - K_{1h}\left(Z_{i} - Z_{j}  \right)] \varphi_{ij}\\
& \leq & \left|K_{1h}\left(\widehat Z_{i} - \widehat Z_{j}  \right) - K_{1h}\left( Z_{i} -  Z_{j}  \right) \right| \varphi_{ij} -K_{1h}\left(\widehat Z_{i} - \widehat Z_{j}  \right) \left|\varphi_{ij} - \widehat \varphi_{ij}\right|\\
& \leq & \left|K_{1h}\left(\widehat Z_{i} - \widehat Z_{j}  \right)\widehat \varphi_{ij} - K_{1h}\left( Z_{i} -  Z_{j}  \right) \varphi_{ij} \right|.
\end{eqnarray*}
Similarly
\begin{eqnarray*}
&&\hspace{-2cm}\left|K_{1h}\left(\widehat Z_{i} - \widehat Z_{j}  \right)\widehat \varphi_{ij} - K_{1h}\left( Z_{i} -  Z_{j}  \right) \varphi_{ij} \right|\\
&\leq &  \;b_n K_{1h}( Z_{i} -  Z_{j} + 2b_n ) +
[  K_{1h}( Z_{i} -  Z_{j} + 2b_n ) - K_{1h}\left(Z_{i} - Z_{j}  \right)] \varphi_{ij}.
\end{eqnarray*}
We focus on the second inequality, the first one can be handled similarly.  To justify (\ref{sn_spec_2}) we can write
\begin{alignat*}{1}
 & \frac{1}{nh}\sum_{j=1}^n \left|K_{1h}\left(\widehat Z_{i} - \widehat Z_{j}  \right)\widehat \varphi_{ij} - K_{1h}\left( Z_{i} -  Z_{j}  \right) \varphi_{ij} \right|\\
\leq \, & \frac{ b_n}{h} \mathbb{E}[K_{1h}\left( Z_{i} -  Z   + 2 b_n \right) \mid Z_i]
\\
& +\frac{ b_n}{nh}\sum_{j=1}^n \left[ K_{1h}\left( Z_{i} -  Z_{j}  + 2 b_n \right) - \mathbb{E}[K_{1h}\left( Z_{i} -  Z_{j}  + 2 b_n \right) \mid Z_i]\right]\\
& + \frac{1}{nh}\sum_{j=1}^n \{ K_{1h}\left(Z_{i} -  Z_{j} + 2b_n  \right)\varphi_{ij} -
\mathbb{E} [K_{1h}\left(Z_{i} -  Z_{j} +2b_n \right)\varphi_{ij}\mid Z_i, W_i]\}\\
& - \frac{1}{nh}\sum_{j=1}^n \{ K_{1h}\left(Z_{i} -  Z_{j}   \right) \varphi_{ij}-
\mathbb{E}[ K_{1h}\left(Z_{i} -  Z_{j} \right)\varphi_{ij}\mid Z_i, W_i]\}\\
& +
\mathbb{E}[ h^{-1} K_{1h}\left(Z_{i} -  Z_{j} +2b_n \right)\varphi_{ij} \mid Z_i, W_i]
 - \mathbb{E}[h^{-1} K_{1h}\left(Z_{i} -  Z_{j}  \right) \varphi_{ij} \mid Z_i, W_i]\\
= \, & A_{0n,i}+A_{1n,i}+A_{2n,i}-A_{3n,i}+A_{4n,i}.
\end{alignat*}
Since $f_Z$ is supposed bounded, by a simple change of variable, we get $\max_{1\leq i\leq n} A_{0n,i}=O(b_n)=o(1)$.

The terms $A_{1n,i},$ $A_{2n,i}$ and $A_{3n,i}$ could be treated similarly, hence we only investigate  $A_{2n,i}.$ First note that, since $\varphi_{ij}\leq 1,$ the function $K_1$ is bounded and integrable and $f_Z$ is bounded, for all $j$ we have
$$
\mbox{\rm Var} \left(K_{1h}\left(Z_{i} -  Z_{j} + 2b_n  \right)\varphi_{ij}  \mid Z_i, W_i\right)\leq Ch,\quad \forall i,
$$
for some constant $C$ independent of $n$ and $Z_i,W_i.$ Using this conditional variance bound and applying Bernstein inequality\footnote{Recall that Bernstein inequality states that if $W_1,\cdots,W_n$ are i.i.d. centered random  variables of variance $\sigma^2$ taking values in the interval $[-M,M],$ then for any $s>0$
\begin{equation*}
\mathbb{P}\left(\left|\frac{1}{n}\sum _{i=1} ^n W_i \right| >s\right) \leq 2\exp\left( -\; \frac{n s^2}{2[\sigma^2 + M s/3]} \right).
\end{equation*}
}
conditionally on the $Z_i,W_i$'s, for any
$t>0$
\begin{multline*}
\mathbb{P}\left[ \max_{1\leq i\leq n}\left\vert \frac{1}{n}\sum_{j=1}^n
\left\{K_{1h}\left( Z_{i} - Z_{j} + 2b_n  \right) -\esp\left[
K_{1h}\left( Z_{i} - Z_{j} + 2b_n \right) |X_{i},W_i\right]\right\}
\right\vert > th\right]  \\
\leq \sum_{i=1}^n\mathbb{E}\!\left[\! \mathbb{P}\!\left[ \left\vert
\frac{1}{n}\sum_{j=1}^nK_{1h}\left( Z_{i} - Z_{j} + 2b_n
\right)   -\mathbb{E}\left[
K_{1h}\left( Z_{i} - Z_{j} + 2b_n \right)
|X_{i},W_i\right]\right\vert > th  \mid X_{i},W_i\right] \right]  \\
\leq 2n\exp \left( -\frac{t^{2}}{2}\; \frac{nh^{2}}{
Ch+thM/3}\right) \leq 2\exp \left( [\ln n]\left[1-\frac{t^{2}}{2}\; \frac{nh/\ln n }{
C+tM/3}\right]\right)
\rightarrow 0\;,
\end{multline*}%
since $nh /\ln n \rightarrow\infty $ under the conditions of point (2) or those of point (3) (here $M$ is any constant that bounds $K_1$). Deduce that $\max_{1\leq i\leq n} A_{2n,i}=o_{\mathbb{P}}(1).$

To complete the proof of (\ref{sn_spec_2}) it remains to investigate the convergence of $A_{4n,i}$ uniformly with respect to $i.$ First, since $\varphi_{ij}\leq 1,$
\begin{multline*}
\left|\mathbb{E}[ h^{-1}K_{1h}\left(Z_{i} - Z_j  +2b_n \right)\varphi_{ij}\mid Z_i,W_i]
- \mathbb{E}[h^{-1} K_{1h}\left(Z_{i} - Z_j  \right)\varphi_{ij}\mid Z_i,W_i]\right| \\ \leq \mathbb{E}\left[ \left|h^{-1}K_{1h}\left(Z_{i} - Z_j  +2b_n \right)
- h^{-1} K_{1h}\left(Z_{i} - Z_j  \right)\right|\mid Z_i\right].
\end{multline*}
If the conditions of point (2) are met, suppose that the sequence $b_n$ used for the definition of the set $\mathcal{E}_{1n}$  in equation (\ref{set_en}) is such that $b_n/h \rightarrow 0$ and $b_n \sqrt{n}/ \ln n \rightarrow\infty.$ By obvious calculations and changes of variables and using the uniform bound for $f_Z,$ for each $1\leq i\leq n$
\begin{multline}\label{int_df}
\mathbb{E}\left[ \left|h^{-1}K_{1h}\left(Z_{i} - Z_j  +2b_n \right)
- h^{-1} K_{1h}\left(Z_{i} - Z_j  \right)\right|\mid Z_i\right]\\
= \int_{\mathbb{R}}[K_1 (u + 2b_nh^{-1})- K_1 (u)]  f_Z (Z_i -uh) du\\\leq
\int_{\mathbb{R}} \left[\int_ 0^{2b_nh^{-1}}|K^\prime (u+v)|dv \right] f_Z (Z_i -uh) du\\
\leq C\int_ 0^{2b_nh^{-1}}\left[\int_{\mathbb{R}} |K^\prime (u+v)|du \right] dv \rightarrow 0.
\end{multline}
Finally, if the conditions of point (3) are met, by an alternative but still obvious change of variables and using the uniform continuity of $f_Z$ and the fact that $b_n\rightarrow 0$, for each $1\leq i\leq n$
\begin{multline*}\label{int_df2}
\mathbb{E}\left[ \left|h^{-1}K_{1h}\left(Z_{i} - Z_j  +2b_n \right)
- h^{-1} K_{1h}\left(Z_{i} - Z_j  \right)\right|\mid Z_i\right]\\ \leq \int_{\mathbb{R}}K_1 (u)\left|f_Z (2b_n + Z_i -uh) -  f_Z (Z_i -uh)\right| du\\
\leq \sup_{t\in\mathbb{R}}\left|f_Z (2b_n + t) -  f_Z (t)\right| \int_{\mathbb{R}}K_1 (u) du
\rightarrow 0.
\end{multline*}
Now the arguments are complete for justifying  (\ref{sn_spec_2}). The rate in (\ref{sn_spec5_2}) could be easily derived from  the rate in (\ref{sn_spec_2}).
\end{proof}

\begin{figure}
\begin{centering}
\includegraphics[scale=0.5,angle=270]{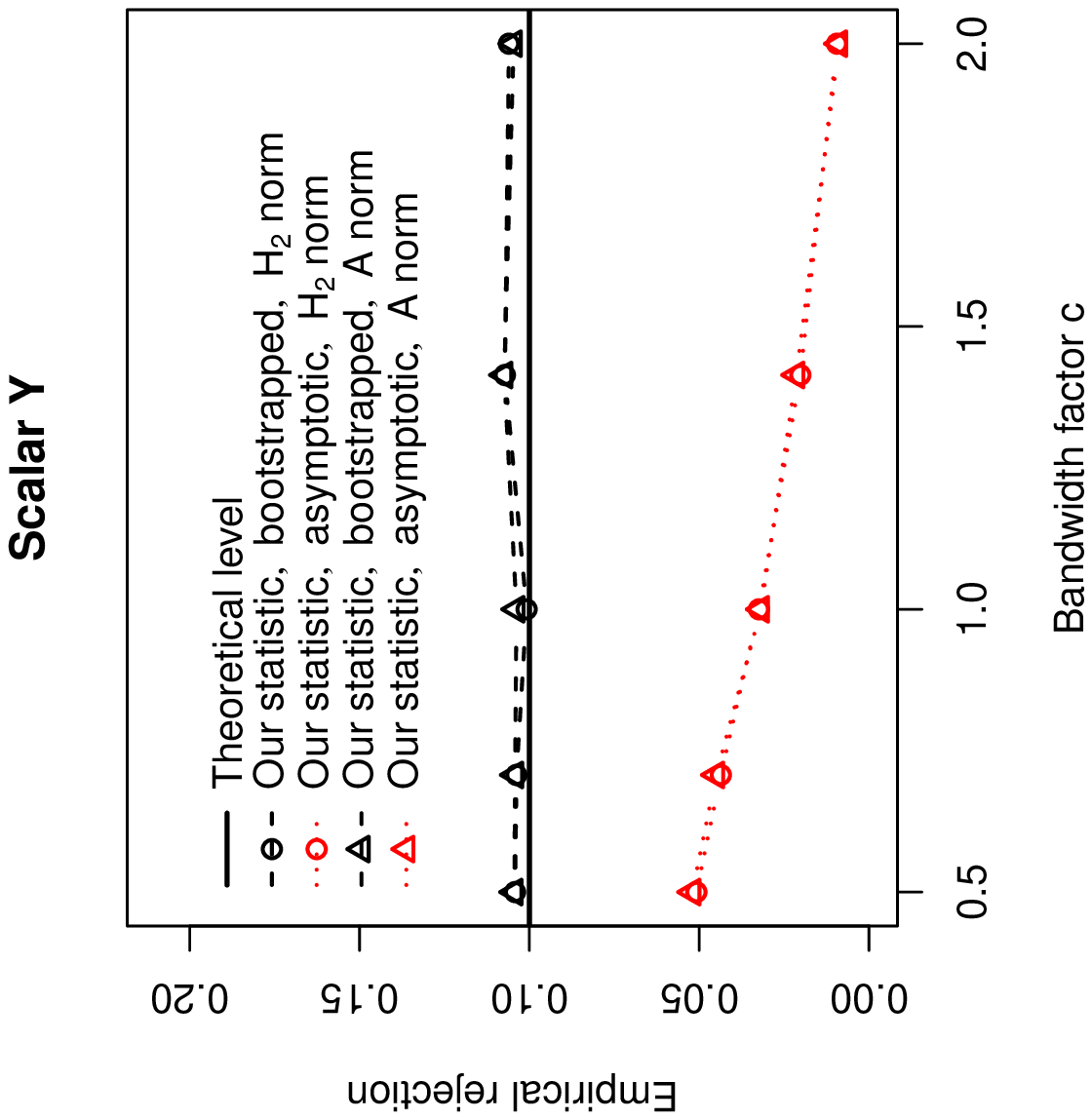}\includegraphics[scale=0.5,angle=270]{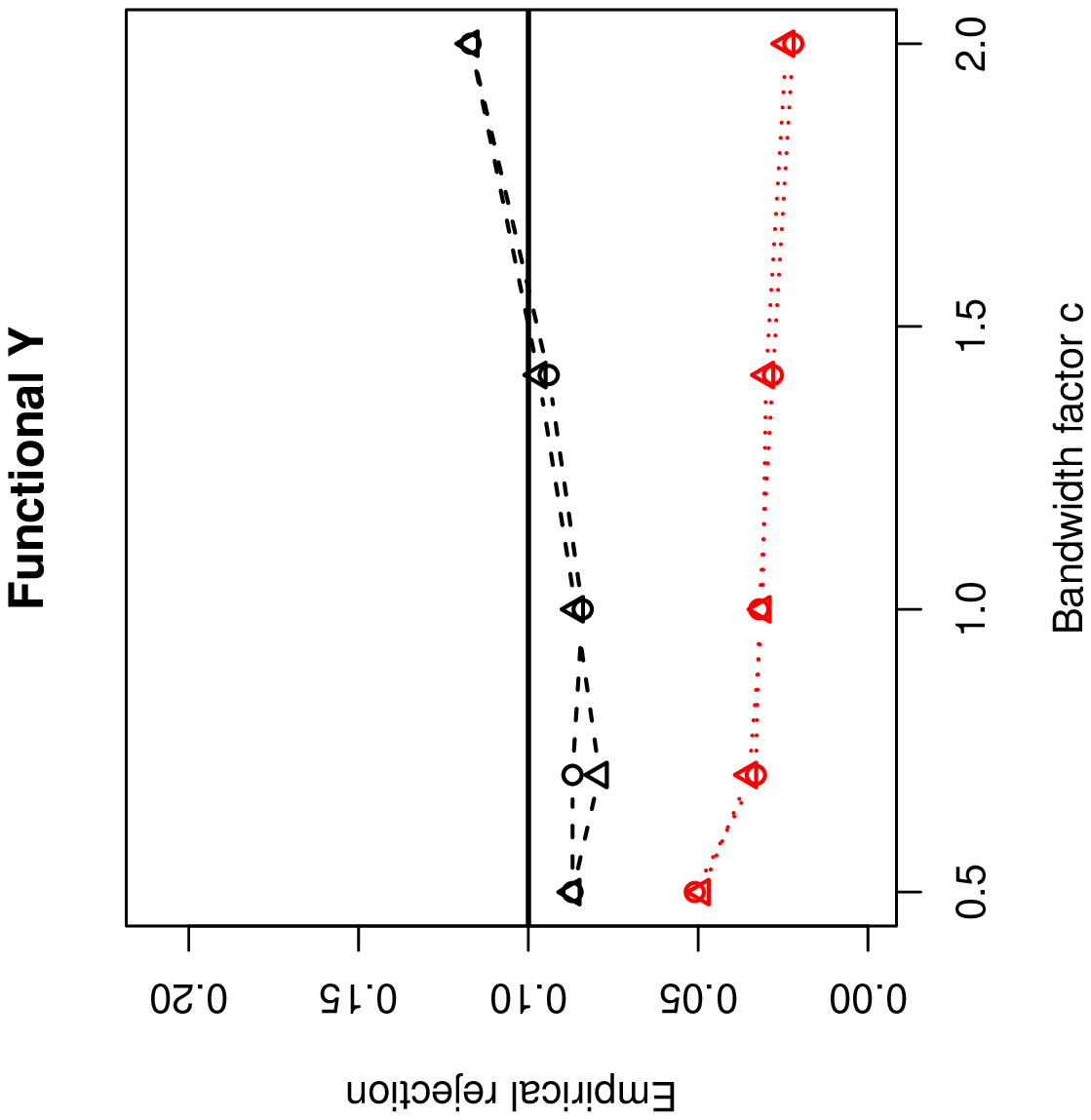}

\caption{Empirical rejection for scalar (left panel) and functional (right panel) $Y$ under the null hypothesis.\label{fig:Level}}
\end{centering}
\end{figure}

\begin{figure}
\begin{centering}
\includegraphics[scale=0.5,angle=270]{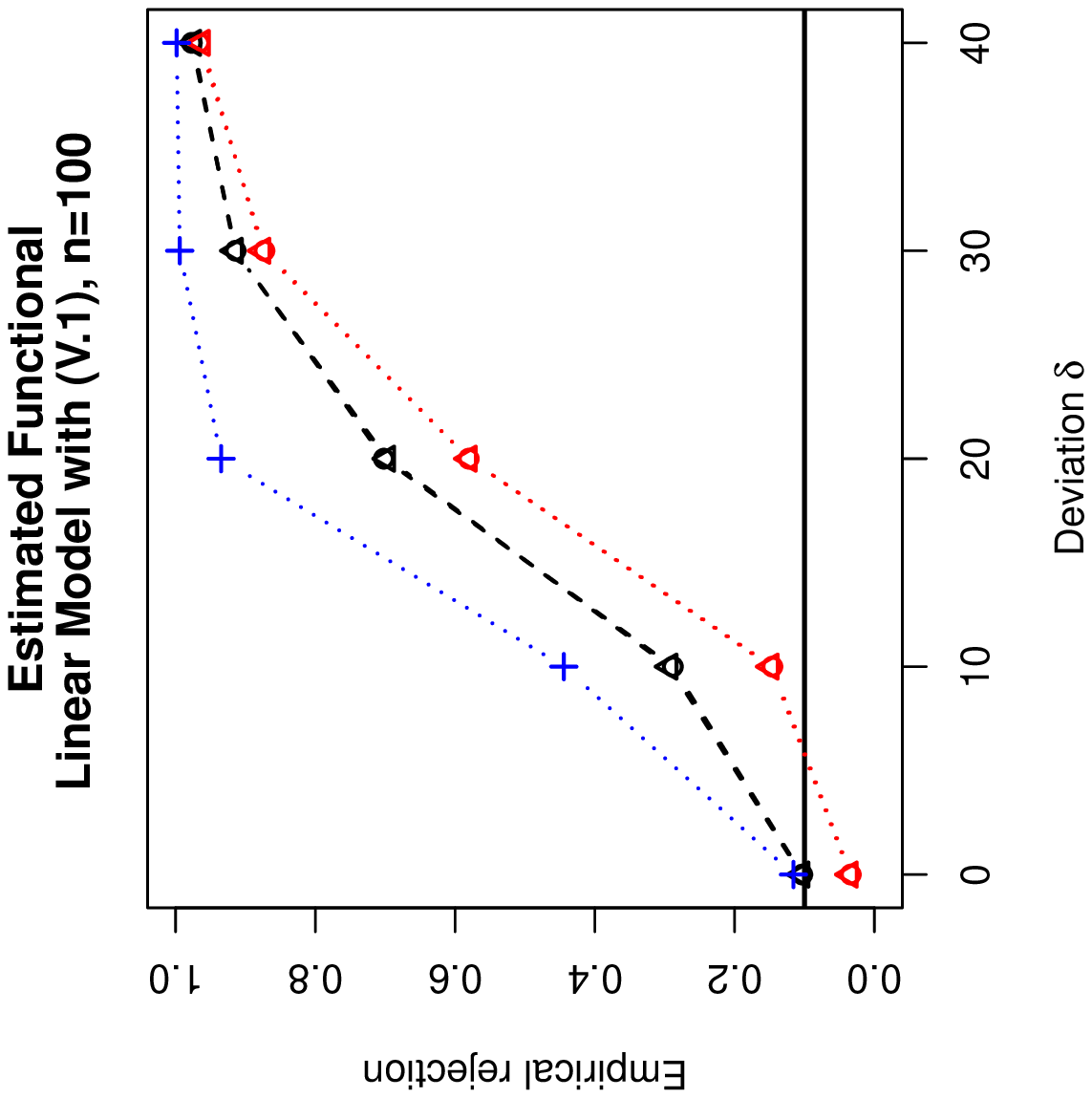}\includegraphics[scale=0.5,angle=270]{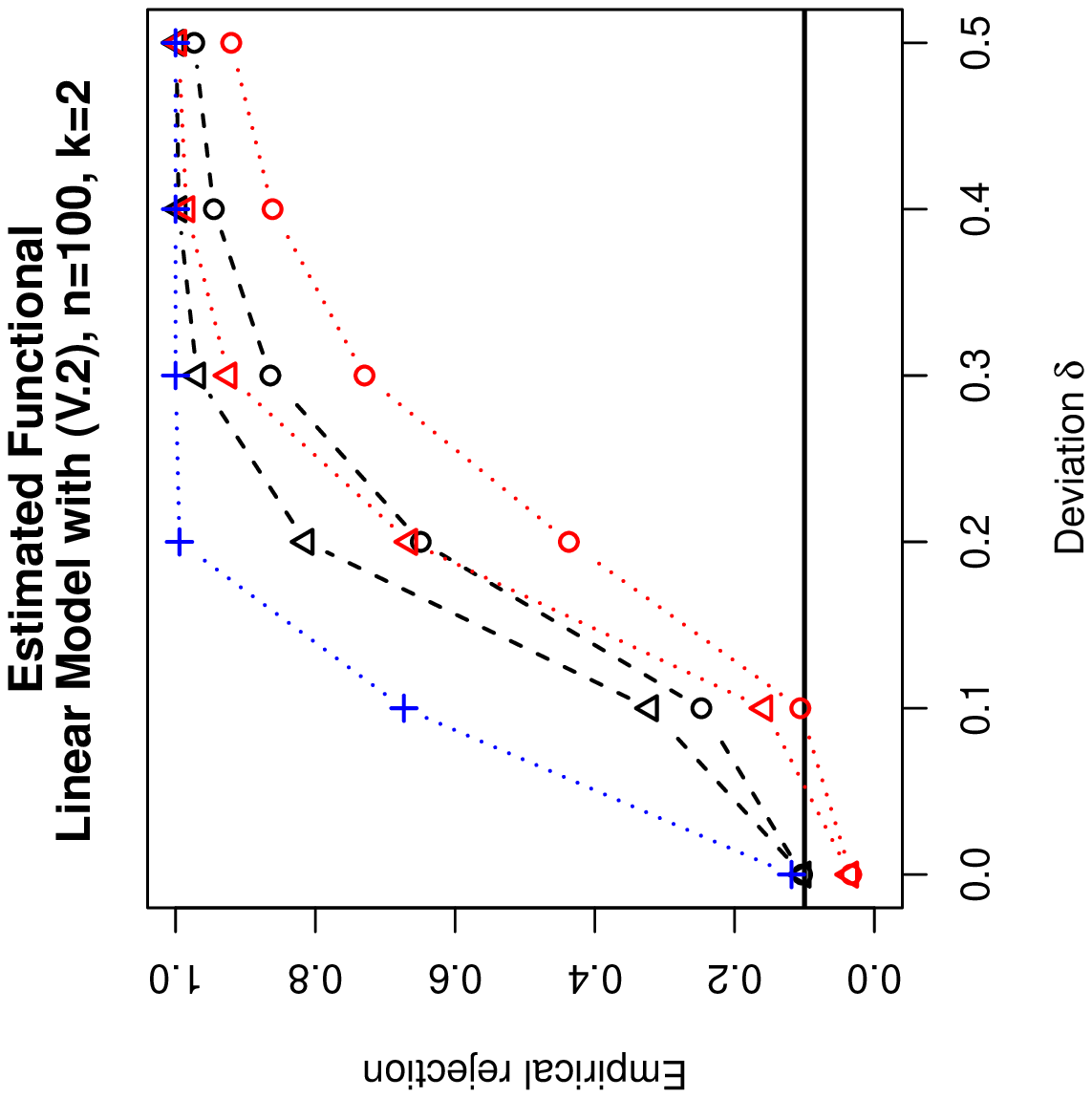}
\par\end{centering}

\begin{centering}
\includegraphics[scale=0.5,angle=270]{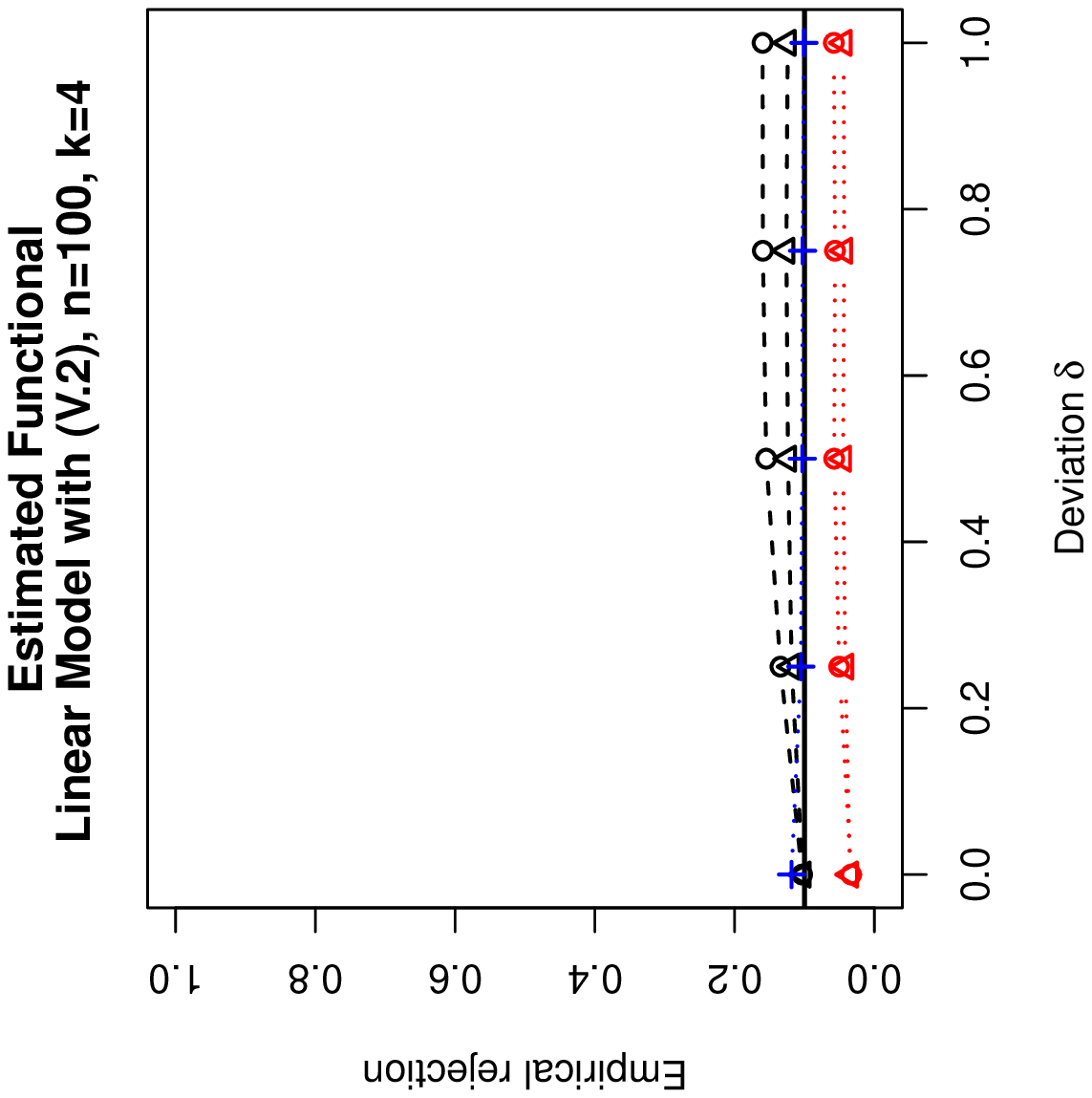}\includegraphics[scale=0.5,angle=270]{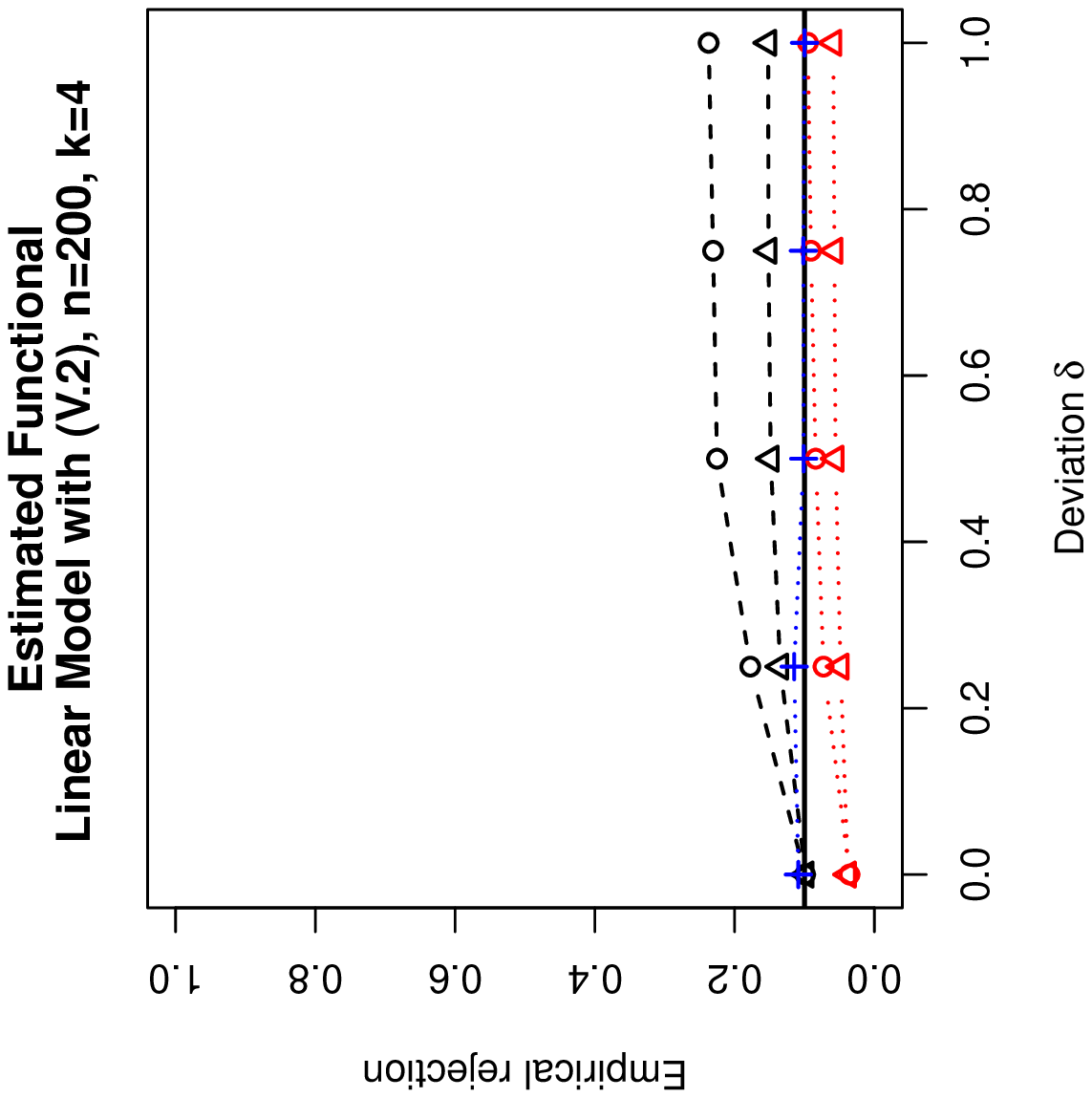}
\par\end{centering}
\hfill\includegraphics[bb=180bp 560bp 280bp 280bp,clip,scale=0.5,angle=270]{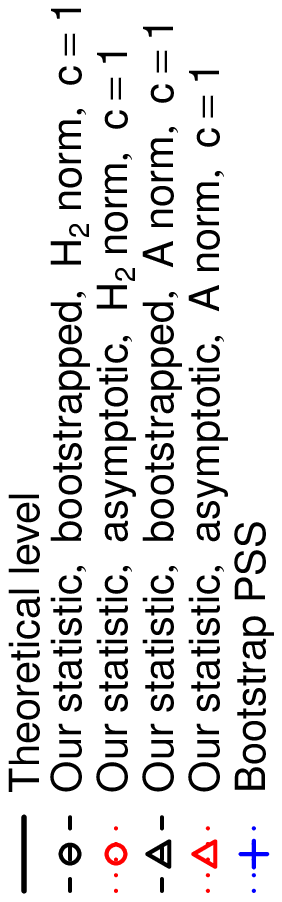}\hfill\hfill
\caption{Empirical rejection for scalar $Y$.
\label{fig:Scalar-Y}}
\end{figure}

\begin{figure}
\begin{centering}
\includegraphics[scale=0.5,angle=270]{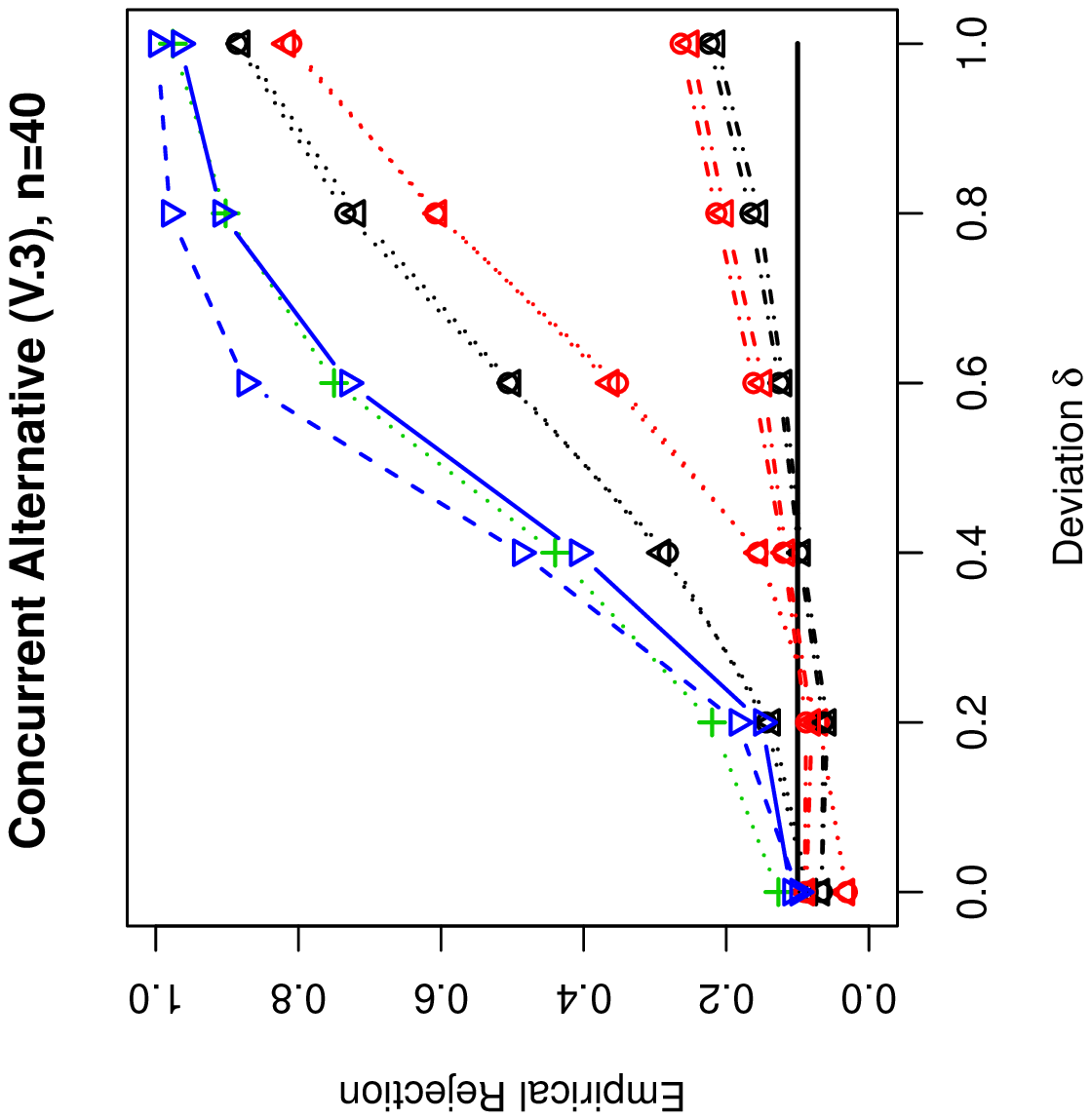}\includegraphics[scale=0.5,angle=270]{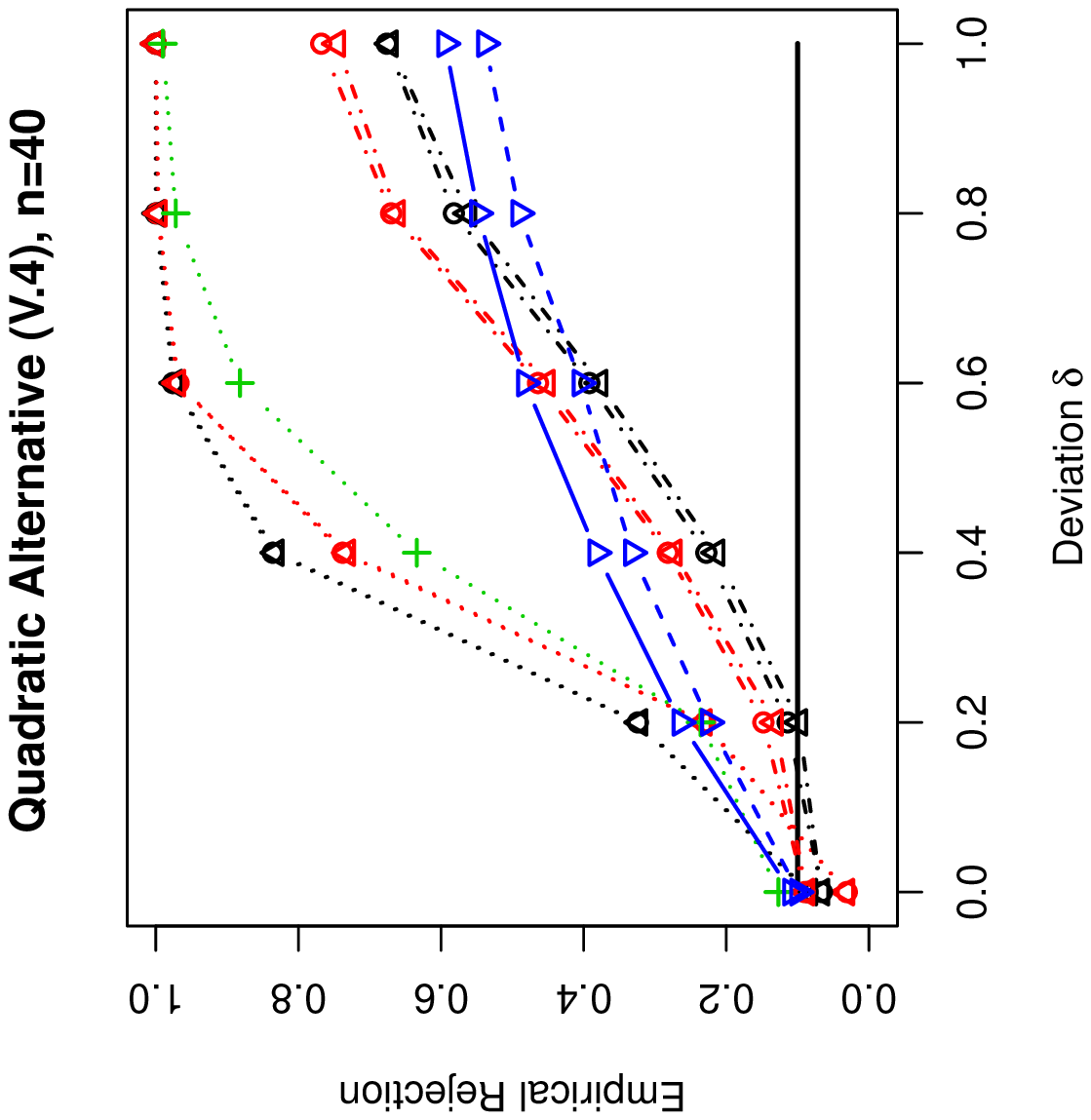}
\par\end{centering}

\begin{centering}
\includegraphics[scale=0.5,angle=270]{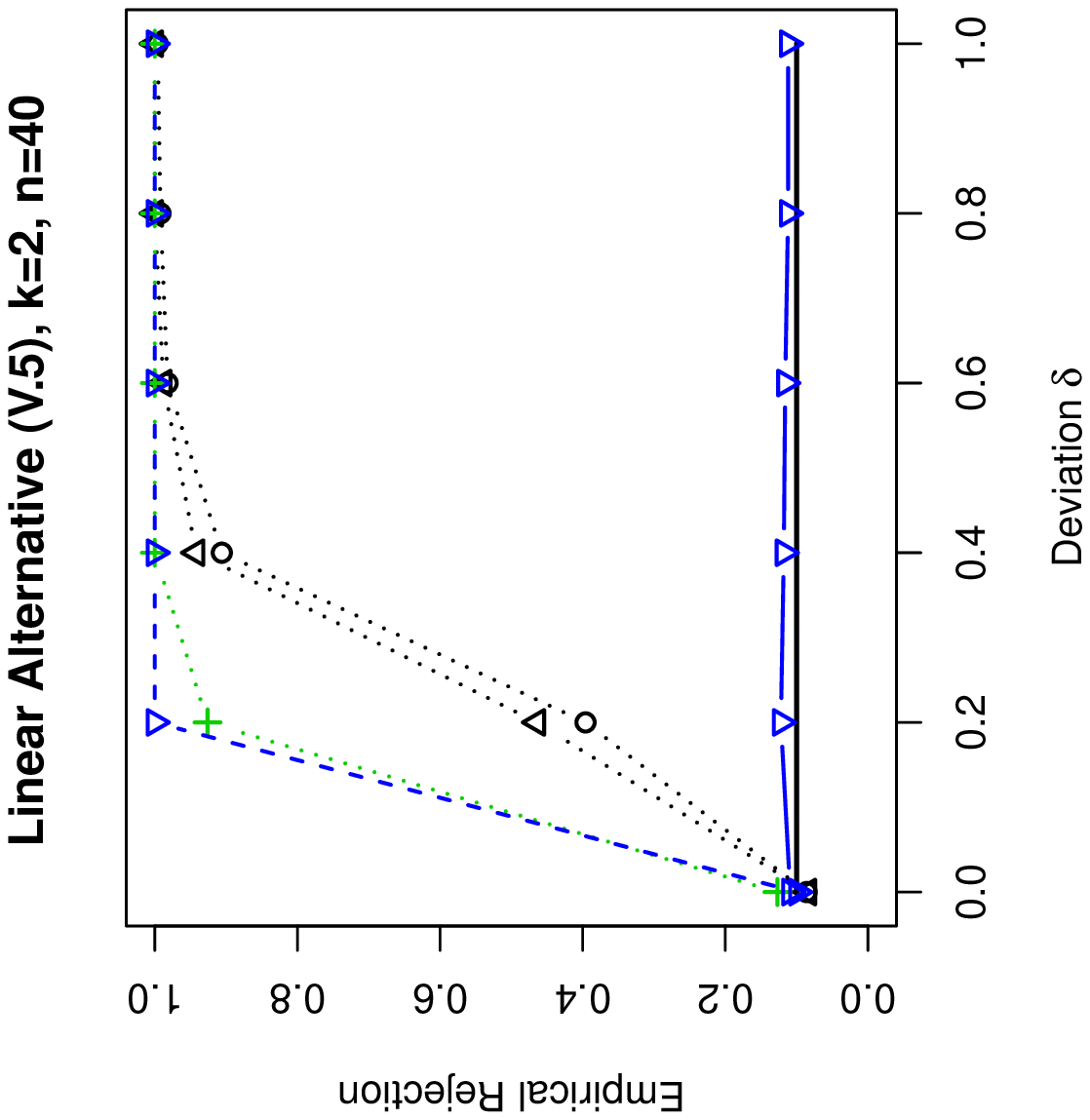}\includegraphics[scale=0.5,angle=270]{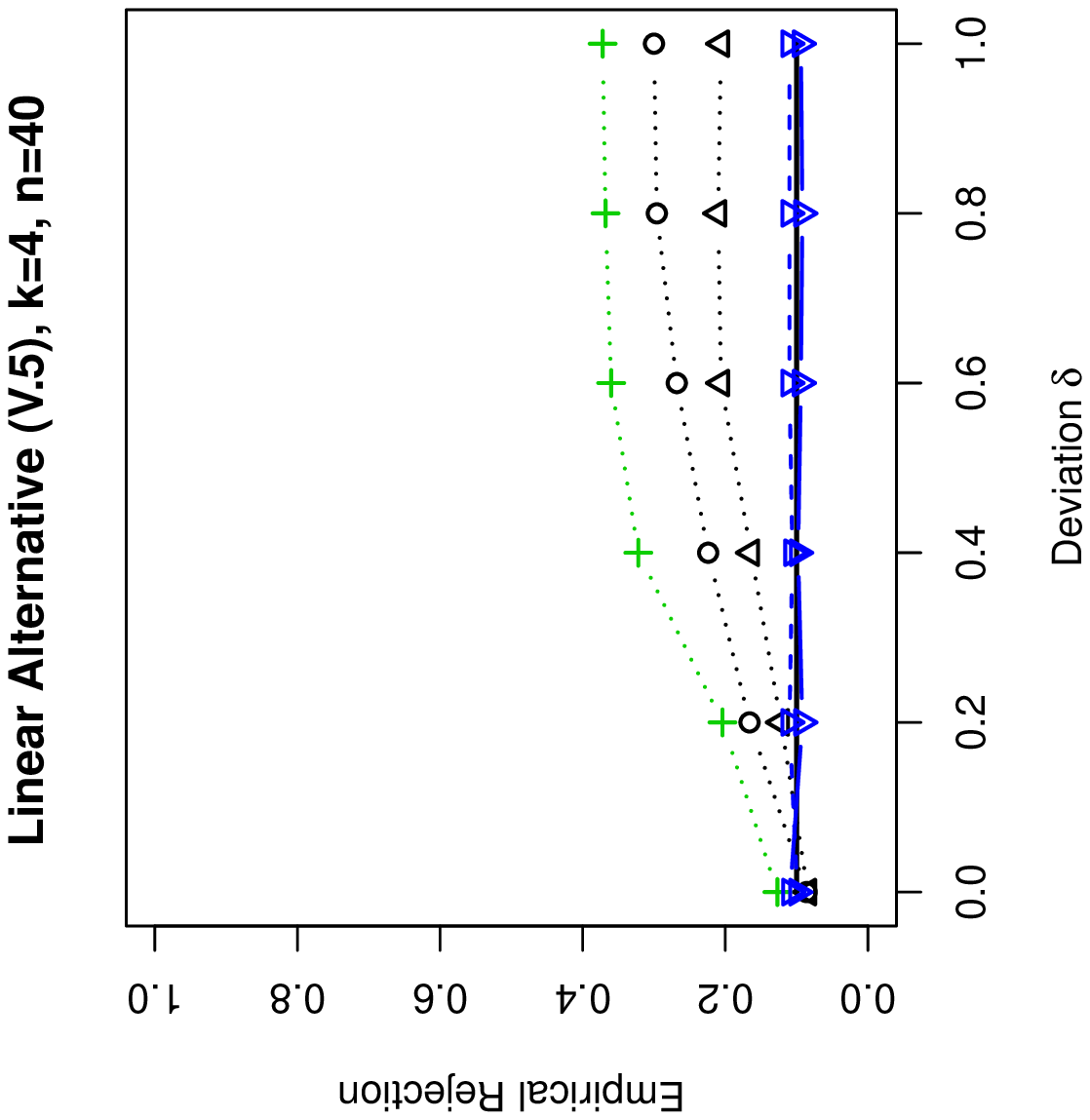}
\par\end{centering}

\begin{centering}
\includegraphics[scale=0.5,angle=270]{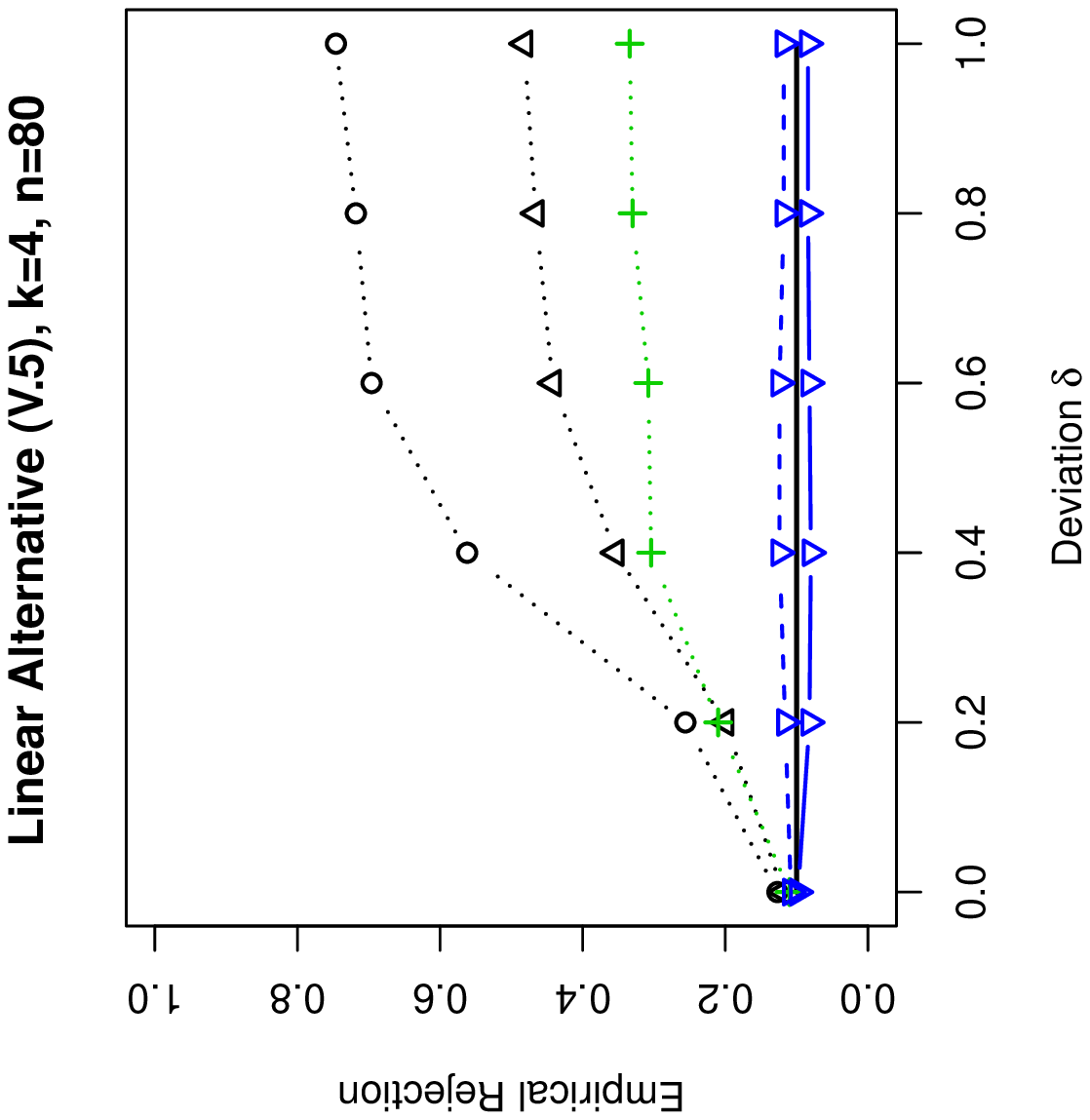}\includegraphics[scale=0.5,angle=270]{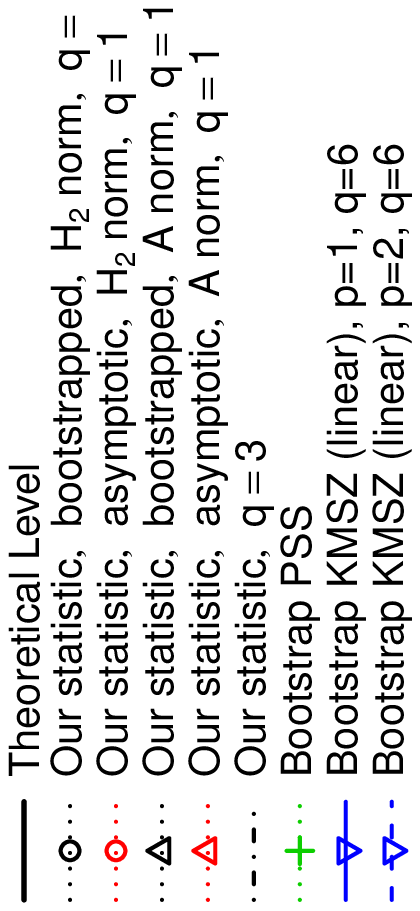}
\par\end{centering}

\caption{Empirical rejection for functional $Y$.\label{fig:Functional-Y}}
\end{figure}

\end{document}